\theoremstyle{plain}
\newtheorem{thm}{Theorem}
\newtheorem{lem}[thm]{Lemma}
\newtheorem{cor}[thm]{Corollary}
\newtheorem{prop}[thm]{Proposition}
\newtheorem{defn}[thm]{Definition}
\newtheorem{conj}[thm]{Conjecture}
\theoremstyle{definition}
\newtheorem{question}[thm]{Question}
\newtheorem{nota}[thm]{Notations}
\newtheorem{rmk}[thm]{Remark}
\numberwithin{thm}{section}
\newcommand{\ga}[2]{\begin{gather}\label{#1}#2 \end{gather}}
\newcommand{\Spec}{{\rm Spec \,}}
\newcommand{\Gal}{{\rm Gal}}
\newcommand{\sE}{{\mathcal E}}
\newcommand{\sL}{{\mathcal L}}
\newcommand{\sM}{{\mathcal M}}
\newcommand{\sO}{{\mathcal O}}
\newcommand{\sZ}{{\mathcal Z}}
\newcommand{\A}{{\mathbb A}}
\newcommand{\C}{{\mathbb C}}
\newcommand{\F}{{\mathbb F}}
\newcommand{\G}{{\mathbb G}}
\newcommand{\N}{{\mathbb N}}
\newcommand{\Q}{{\mathbb Q}}
\newcommand{\Z}{{\mathbb Z}}
\DeclareMathOperator{\GL}{GL}
\DeclareMathOperator{\SL}{SL}
\DeclareMathOperator{\PGL}{PGL}
\begin{document}

\title[Integrality]{ Integrality of the Betti moduli space }
\author{Johan de Jong}
\author{H\'el\`ene Esnault }
\address{ Mathematics Hall, 2990 Broadway, New York, NY 10027, USA }
 \email{ dejong@math.columbia.edu}
\address{Freie Universit\"at Berlin, Arnimallee 3, 14195, Berlin,  Germany}
\email{esnault@math.fu-berlin.de}

\thanks{ During the preparation of this work, the second named  author was supported by the Samuel Eilenberg Chair of Columbia University. The excellent working conditions and the friendly atmosphere are gratefully acknowledged.}
\maketitle

\begin{abstract}
 If in a given rank $r$, there is an irreducible complex local system with
torsion determinant and quasi-unipotent monodromies at infinity on a smooth
quasi-projective variety, then for every prime number $\ell$, there is an
absolutely irreducible $\ell$-adic local system of the same rank, with the same
determinant and monodromies at infinity, up to semi-simplification. A finitely
presented group is said to be weakly integral with respect to a torsion
character and a rank $r$ if once there is an irreducible rank $r$ complex
linear representation, then for any $\ell$, there is an absolutely irreducible
one of rank $r$ and determinant this given character,  which is defined over
$\bar{ \mathbb{Z}}_\ell$. We prove that this property is a new obstruction for
a finitely presented group to be the fundamental group of a smooth
qusi-projective complex variety. The proofs rely on the arithmetic Langlands
program via the existence of Deligne's companions (L. Lafforgue, Drinfeld) and
the geometric Langlands program via de Jong's conjecture (Gaitsgory for $\ell
\ge 3$). We also define weakly arithmetic complex local systems and show they
are Zariski dense in the Betti moduli. Finally we show that our method gives an
arithmetic proof of  the  Corlette-T. Mochizuki theorem, proved using tame pure
imaginary harmonic metrics,  which shows  the pull-back by a morphism between two
smooth complex algebraic varieties of a semi-simple complex local system is
semi-simple.

\end{abstract}

\section{Introduction}

Let $X$ be a smooth quasi-projective variety defined over the field of complex
numbers. Let $r$ be a positive natural number. Let $\sL$ be a rank $1$
complex local system on $X$ of finite order.
We fix a  smooth projective compactification $X\hookrightarrow \bar X$ with boundary divisor $\bar X\setminus X$  being a strict normal crossings divisor, called in the sequel  ``good compactification''.
 For each
irreducible component $D_i$ of $\bar X\setminus X$, we fix a quasi-unipotent
conjugacy class $T_i \subset \GL_r(\C)$.
For any conjugacy class $T \subset \GL_r(K)$, where $K$ is an
algebraically closed field of characteristic $0$, we denote by
$T^{ss} \subset \GL_r(K)$ the conjugacy class of its semi-simplification.

\begin{thm}[Weak integrality property with respect to $(r, \sL, T_i)$]
\label{thm:top}
Assume there is an irreducible topological rank $r$ complex local system
$\mathbb L_\C$ with determinant $\sL$ and monodromies in $T_i$ at infinity.
Then for any prime number $\ell$, there is an $\ell$-adic local system
$\mathbb L_\ell$ which
\begin{itemize}
\item[1)] has rank $r$ and is irreducible over $\bar \Q_\ell$,
\item[2)] has determinant $\sL$,
\item[3)] has monodromies $T_{i, \ell}$
at infinity such that $T_i^{ss} = T_{i,\ell}^{ss}$.
\end{itemize}
\end{thm}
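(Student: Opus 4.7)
The plan is to reduce to the case of a smooth quasi-projective curve via a Lefschetz-type argument, to use the algebraic structure of the Betti moduli space to replace $\mathbb L_\C$ by a $\bar\Q$-rational representation in the same irreducible component, and then to invoke the arithmetic Langlands correspondence (existence of companions, after Drinfeld and L.~Lafforgue) together with de Jong's conjecture (in Gaitsgory's form for $\ell\ge 3$) to produce the required $\ell$-adic local system.

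\emph{Step 1 (reduction to curves).} By a Bertini-Lefschetz argument on $\bar X$, I would find a smooth curve $C = \bar C\cap X\hookrightarrow X$, where $\bar C\subset\bar X$ is a smooth projective curve meeting each boundary component $D_i$ transversally, such that $\pi_1(C)\twoheadrightarrow\pi_1(X)$ is surjective, the restriction $\mathbb L_\C|_C$ is still irreducible, the restriction $\sL|_C$ is still of finite order, and each boundary point of $\bar C\setminus C$ lies on a unique $D_i$ with monodromy in $T_i^{ss}$. A representation of $\pi_1(X)$ is determined by its restriction to $\pi_1(C)$ and a representation of $\pi_1(C)$ comes from one of $\pi_1(X)$ exactly when its class in $M_B(\pi_1(C))$ lies in the image of the scheme-theoretic restriction map $M_B(\pi_1(X))\to M_B(\pi_1(C))$, a closed constructible subscheme containing $[\mathbb L_\C|_C]$.

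\emph{Step 2 (algebraic model).} The Betti moduli $M_B=M_B(\pi_1(C),r,\sL,T_i^{ss})$ is a separated scheme of finite type over $\Z[1/N]$, for $N$ a common denominator of the torsion of $\sL$ and the eigenvalues of the $T_i$. The irreducible component $Z\subset M_B\otimes\C$ containing $[\mathbb L_\C|_C]$ is defined over $\bar\Q$, the irreducible locus is open, and $\bar\Q$-points of $Z$ are Zariski dense. I would pick such a point $y\in Z(\bar\Q)$ in the irreducible locus, giving $\rho_y\colon\pi_1(C)\to\GL_r(F)$ for a number field $F\subset\bar\Q$.

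\emph{Step 3 (companions in positive characteristic).} Spread out $\bar C\supset C$ together with $\rho_y$ over a finitely generated $\Z$-algebra $R$, and specialize at a closed point $s\in\Spec R$ with residue field $\F_q$, $\gcd(q,\ell N)=1$. Fixing an embedding $\bar\Q\hookrightarrow\overline{\Q}_{\ell'}$ for an auxiliary prime $\ell'\ge 3$ distinct from $\ell$ and $\mathrm{char}(\F_q)$, specialization through tame fundamental groups turns $\rho_y$ into an irreducible $\ell'$-adic local system $\sG_{\ell'}$ on $C_s/\F_q$. Gaitsgory's theorem on de Jong's conjecture (for $\ell'\ge 3$) guarantees integrality of $\sG_{\ell'}$, so that L.~Lafforgue's theorem on Deligne's companions applies and produces an $\ell$-adic companion $\sG_\ell$ on $C_s$ whose Frobenius traces -- and hence determinant $\sL$ and semisimplified monodromies at each boundary point of $\bar C_s$ -- coincide with those of $\sG_{\ell'}$.

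\emph{Step 4 (back to $X$; main obstacle).} The tame specialization isomorphism $\pi_1^{\rm tame}(C)\cong\pi_1^{\rm tame}(C_s)$ lifts $\sG_\ell$ to an irreducible $\ell$-adic local system $\sF_\ell$ on $C$, whose $\overline{\Q}_\ell$-point of $M_B(\pi_1(C))$ matches the $\overline{\Q}_\ell$-realization of $y$ by the trace-semisimple rigidity, hence lies in the image of $M_B(\pi_1(X))\to M_B(\pi_1(C))$; it therefore extends to an $\ell$-adic local system $\mathbb L_\ell$ on $X$ with the prescribed determinant and monodromy classes. The main obstacle is Step~3: one must ensure \emph{integrality} of the $\ell'$-adic realization of $\rho_y$ before Lafforgue applies, which is exactly the content of Gaitsgory's theorem and restricts the argument to $\ell'\ge 3$; the prime $\ell=2$ then has to be reached by iterating the companion construction through two distinct auxiliary primes, both at least $3$.
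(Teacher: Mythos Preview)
Your proposal has a genuine gap in Step~3, with a knock-on failure in Step~4. The specialization map $sp^{\rm top}_{\C,\bar s}:\pi_1(C(\C))\to\pi_1^t(C_{\bar s})$ lands in the \emph{geometric} tame fundamental group, so the local system $\sG_{\ell'}$ you obtain lives on $C_{\bar s}$, not on $C_s$; nothing in your argument makes it Frobenius-invariant, i.e.\ arithmetic. De~Jong's conjecture (Gaitsgory) does not supply this: it is a statement about continuous representations of the \emph{arithmetic} group $\pi_1(C_s)$ over a local field of residue characteristic $\ell'$, asserting their integrality; it says nothing about descending a geometric local system to an arithmetic one. Without arithmeticity, Lafforgue's companion theorem does not apply. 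Moreover, even granting a companion $\sG_\ell$, Step~4 breaks: companions share Frobenius traces on $C_s$, not the underlying point of $M_B(\pi_1(C))$, so the pullback of $\sG_\ell$ to $C$ is in general a \emph{different} local system from any $\bar\Q_\ell$-realization of $y$, and there is no reason it lies in the image of $M_B(\pi_1(X))\to M_B(\pi_1(C))$ or extends to $X$.

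The paper's route is designed precisely to manufacture arithmeticity. One does not pick a $\bar\Q$-point of $M_B(X,r,\sL,T_i)$; instead one picks a \emph{closed point} $z$ of residue field $\F_{\ell^m}$ (with $\ell\ge 3$) in the smooth locus of the reduced moduli over $\sO_K$. This $z$ corresponds to an absolutely irreducible $\F_{\ell^m}$-local system with \emph{finite image}, which therefore, after specialization to $X_{\bar s}$, is fixed by a power $\Phi^n$ of Frobenius. The formal completion of $M_B$ at $z$ is then identified with the Mazur deformation space $D_{z,\bar s}(r,\sL,T_i)$ of this residual representation (Proposition~\ref{prop:iota}), on which $\Phi^n$ acts. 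De~Jong's conjecture enters only now, via Drinfeld's fixed-point argument: it forces $\Phi^n$ to have a $\bar\Z_\ell$-fixed point on the formally smooth reduced deformation space, and \emph{that} fixed point is the sought arithmetic $\ell$-adic local system. One then runs companions for all $\ell'\neq p$ (Drinfeld in higher dimension, so no reduction to curves is needed and the extension problem of your Step~4 does not arise), and handles $\ell'=p$ by choosing a second closed point $s'\in S^\circ$ of different residue characteristic.
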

For example, if  $X$ is smooth projective and $\mathbb L_\C$  is an irreducible rank $r$ local system with  trivial determinant, then Theorem~\ref{thm:top}  says that for all $\ell$, there is an irreducible  rank $r$ $\ell$-adic local system with trivial determinant.

\medskip 

Forgetting the conditions at infinity, if we fix a finitely presented group $\Gamma$, a natural number $r\ge 1$, and a rank $1$ torsion complex character $\chi: \Gamma\to \C^\times$, we pose the 

\begin{defn}
\label{defn:weak}
$\Gamma$ has the weak integrality property with respect to $(r, \chi)$
if, assuming there is an irreducible representation
$\rho : \Gamma\to \GL_r(\C)$ with determinant $\chi$,
then for any prime number $\ell$, there is a representation
$\rho_\ell: \Gamma\to \GL_r(\bar \Z_\ell)$ with determinant
$\chi$ which is irreducible over $\bar \Q_\ell$. 
\end{defn}

This property depends only on the isomorphism class of $\Gamma$. 
In \cite{BBV22}, the authors study the  $\SL(2, \C)$-character variety of irreducible representations of a residually finite  group $\Gamma_0$ defined in \cite[Theorem~4]{DS05}, with $2$ generators  $\{a,b\}$ and one relation $b^2=a^2ba^{-2}.$   They in particular show that it is $0$-dimensional, defined over $\Q$, with only two conjugate points, which correspond to irreducible dense complex representations,  which are {\it not integral} at the prime $2$. See Section~\ref{BBV}. We conclude that $\Gamma_0$ does not have the weak integrality property with respect to $(2, \mathbb I)$ where $\mathbb I$ is the trivial character.

\begin{thm}[Obstruction]
\label{thm:obs}

If $X$ is a smooth complex quasi-projective variety, then $\Gamma = \pi_1(X(\C))$
satisfies the weak integrality property for any pair $(r, \chi)$.
In particular, the group $\Gamma_0$ above cannot be the topological
fundamental group of a smooth complex quasi-projective variety.
\end{thm}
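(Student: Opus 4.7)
The theorem has two parts: weak integrality of $\Gamma = \pi_1(X(\C))$ at every pair $(r,\chi)$, and the corollary that $\Gamma_0$ cannot be such a fundamental group. The second part is immediate from the first together with the BBV22 computation recalled just before the statement: that analysis exhibits an irreducible rank $2$ complex representation of $\Gamma_0$ with trivial determinant that fails to be $\bar\Z_2$-integral, so $\Gamma_0$ violates weak integrality at $(2,\mathbb I)$ and therefore cannot be any $\pi_1(X(\C))$. So the only content is the first part.

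The plan is to reduce the first part to Theorem~\ref{thm:top}. Let $\rho:\Gamma\to\GL_r(\C)$ be irreducible with torsion determinant $\chi$, viewed as a complex local system $\mathbb L_\C$ on $X$. Fix a good compactification $X\hookrightarrow\bar X$ with boundary components $D_i$ and meridians $\gamma_i\in\Gamma$ around them. If the monodromies $T_i := \rho(\gamma_i)$ are quasi-unipotent, apply Theorem~\ref{thm:top} directly with the data $(r,\chi,T_i)$: for each $\ell$ it produces an $\ell$-adic local system $\mathbb L_\ell$ on $X$ of rank $r$ and determinant $\chi$, absolutely irreducible over $\bar\Q_\ell$. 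As a continuous representation of the profinite completion $\hat\Gamma$ of $\Gamma$, $\mathbb L_\ell$ has image in some compact subgroup of $\GL_r(\bar\Q_\ell)$, hence, up to conjugation, in $\GL_r(\bar\Z_\ell)$. Pulling back along $\Gamma\to\hat\Gamma$ then yields the desired $\rho_\ell:\Gamma\to\GL_r(\bar\Z_\ell)$. Absolute irreducibility is preserved because $\Gamma\to\hat\Gamma$ has dense image and every $\Gamma$-invariant subspace of a finite-dimensional $\bar\Q_\ell$-vector space is automatically closed, hence stable under $\hat\Gamma$.

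To handle the general case in which $\rho(\gamma_i)$ is not quasi-unipotent, the strategy is to replace $\rho$ by another irreducible representation $\tilde\rho$ of the same rank and determinant whose infinity-monodromies are quasi-unipotent, and to run the argument above on $\tilde\rho$. Two routes are available. Route (i): invoke Simpson's theorem, extended by T.~Mochizuki to the quasi-projective setting via tame pure imaginary harmonic bundles, that every connected component of the Betti moduli of irreducible complex local systems of rank $r$ and determinant $\chi$ contains a point coming from a polarizable variation of Hodge structure, whose monodromies at infinity are quasi-unipotent by the Schmid/Borel local monodromy theorem. Route (ii): invoke the Zariski density of weakly arithmetic complex local systems in the Betti moduli announced in the abstract, which upgrades the existence of one complex irreducible to the existence of a weakly arithmetic one, carrying by construction $\ell$-adic integral companions for every $\ell$.

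The hard step will be preserving irreducibility in this reduction. In route (i), the $\C^\times$-flow on the Higgs-bundle moduli typically sends a stable Higgs bundle to a merely polystable fixed point, corresponding to a semisimple but possibly reducible variation of Hodge structure; since Theorem~\ref{thm:top} is stated for irreducible input, one must either argue that an irreducible variation of Hodge structure can be chosen in the correct component, or apply Theorem~\ref{thm:top} to each irreducible summand of rank $r_j$ and then reassemble the pieces into an irreducible rank-$r$ $\ell$-adic representation of determinant $\chi$, a nontrivial step. Route (ii), based on density of the weakly arithmetic locus, sidesteps this difficulty and is presumably the intended path.
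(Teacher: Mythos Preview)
Your route (ii) is precisely the paper's argument: density of weakly arithmetic local systems in $M_B(X,r,\sL)$ (Theorem~\ref{thm:dens}) replaces $\rho$ by a weakly arithmetic $\tilde\rho$, which then has quasi-unipotent monodromies at infinity by Grothendieck's local monodromy theorem \cite[Appendix]{ST68} (this is the one link you left implicit in the phrase ``carrying by construction $\ell$-adic integral companions for every $\ell$''---weak arithmeticity by itself only gives a single $\ell$, and it is the quasi-unipotence that lets you feed $\tilde\rho$ into Theorem~\ref{thm:top}). Your handling of the already quasi-unipotent case and of $\Gamma_0$ match the paper as well.
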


Our  obstruction for an abstract finitely presented group $\Gamma$ to be the topological fundamental group of a smooth complex quasi-projective seems to be of a new kind.  We do not have to specify a finite set of conjugacy classes in   $\Gamma$  which would be the local monodromies at infinity. See Section~\ref{sec:obs} for the proof of Theorem~\ref{thm:obs}.

\medskip

 Ultimately, as we shall see below, the proof of Theorem~\ref{thm:top} relies on the arithmetic Langlands program as proven by L. Lafforgue \cite{Laf02}, on the existence of $\ell$-adic companions shown by him in dimension $1$ and Drinfeld \cite{Dri12}  in higher  dimension, and on de Jong's conjecture \cite{dJ01}, proved  for $\ell \ge 3$ by Gaitsgory \cite{Gai07}, using the geometric Langlands program. 

\medskip
We now explain in which framework Theorem~\ref{thm:top} is located and proven.
Let $S$ be an affine scheme of finite type  over $\Z$ with $\sO(S) \subset \C$,
such that a  good compactification $X \hookrightarrow \bar X$ and a given complex point $x \in X$
have a model over $S$.  This means that we have a relative good compactification
$X_S \hookrightarrow \bar X_S$ over $S$ such that $\bar X_S\setminus X_S$ is a relative normal crossings divisor, we have  an $S$-point $x_S$ of $X_S$
whose base change to $\Spec(\C)$ recovers $x \in X \hookrightarrow \bar X$.
We also assume the orders of $\sL$ and of the eigenvalues of the $T_i$
are invertible on $S$.
For any closed point $s \in |S|$ of residue field $\F_q$ of characteristic
$p>0$, with a $\bar \F_p$-point $\bar s$ above it, we denote by 
\ga{}{sp_{\C,\bar s}: \pi_1(X_\C, x_\C)\to \pi_1^t(X_{\bar s}, x_{\bar s}) \notag}
the continuous  surjective specialization  homomorphism to the tame fundamental group 
\cite[Expos\'e~XIII 2.10,
Corollaire~2.12]{SGA1}. Precomposing with the profinite completion homomorphism
\ga{}{  \pi_1(X(\C), x(\C))\to \pi_1(X_\C, x_\C)\notag}
from the topological fundamental group to the \'etale one  over $\C$  yields the homomorphism
\ga{}{sp^{\rm top} _{\C,\bar s}: \pi_1(X(\C), x(\C))\to \pi_1^t(X_{\bar s}, x_{\bar s}). \notag}
 This enables us to consider the topological pull-back $(sp^{\rm top} _{\C,\bar s})^*(\mathbb L_{\ell, \bar s})$ of any tame $\ell$-adic local system $\mathbb L_{\ell, \bar s}$ on $X_{\bar s}$, in particular of  those  $\mathbb L_{\ell, \bar s}$ which are arithmetic, that is defined over $X_{\F_{q'}}$ for a finite extension $\F_q\to \F_{q'}\subset \bar \F_p$, as in \cite[Section~3]{EG18}.  The topological pull-back $(sp^{\rm top} _{\C,\bar s})^*(\mathbb L_{\ell, \bar s})$ is defined over $\bar \Z_\ell$.
We prove

\begin{thm}
\label{thm:arithm}
If there is one (resp.\ infinitely many pairwise non-isomorphic)
irreducible topological rank $r$ complex local system (resp.\ systems)
$\mathbb L_\C$ with determinant $\sL$ and monodromies
in $T_i$ at infinity, then there is a non-empty open subscheme
$S^\circ\subset S$ such that for any two closed points
$s, s'\in |S|$ of residual characteristics $p\neq p'$ it holds:
\begin{itemize}
\item[1)] for any prime number $\ell\neq p$ there is one (resp.\ infinitely many pairwise non-isomorphic) arithmetic local system (resp.\ systems) $\mathbb L_{\ell, \bar s}$ on $X_{\bar s}$;
\item[2)] which has (resp.\ have)  determinant $\sL$,
with quasi-unipotent monodromies $T_{i,\ell, \bar s}$  at infinity such that $T_i^{ss}=T_{i,\ell, \bar s}^{ss}$;
 \item[3)]   which is (are) irreducible over $\bar \Q_\ell$;
 \item[4)] for $\ell=p$ there is one (resp.\ infinitely many pairwise non-isomorphic) arithmetic local system (resp.\ systems) $\mathbb L_{p, \bar s'}$ on $X_{\bar s'}$ with 2),3) where $\ell$ is replaced by $p$;
\item[5)] for any prime number $\ell$,  the topological pull-backs $(sp^{\rm top} _{\C,\bar s})^*(\mathbb L_{\ell, \bar s})$ (which in the resp.\ case are pairwise non-isomorphic) 
have properties 2) and 3) as topological local systems. 
\end{itemize}
\end{thm}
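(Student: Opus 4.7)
The plan is to reduce to the existence of one arithmetic $\ell_0$-adic local system on $X_{\bar s}$ for a single prime $\ell_0$ invertible on $S$, via spreading the Betti moduli, then to propagate to all primes $\ell \neq p$ via Drinfeld's $\ell$-adic companions, and finally to handle $\ell = p$ in item (4) by carrying out the construction at a second closed point $s'$ of residual characteristic $p' \neq p$. Item (5) will come essentially for free from the functoriality of $sp^{\rm top}_{\C, \bar s}$ and the compatibility of specialization with the boundary structure.

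For the base case, I would consider the quasi-projective Betti moduli $M_B^{\mathrm{irr}}$ of absolutely irreducible rank $r$ complex local systems on $X$ with determinant $\sL$ and semi-simplified monodromies $T_i^{ss}$ at infinity. Since $\sL$ has finite order and the $T_i$ are quasi-unipotent, $M_B^{\mathrm{irr}}$ is of finite type and defined over a cyclotomic number field $K$. After enlarging $S$ so that $\sO_K[1/N] \subset \sO(S)$ for a suitable $N$, $M_B^{\mathrm{irr}}$ spreads to a finite-type $S$-scheme, and by standard spreading-out the $\C$-point(s) provided by the hypothesis descend, after a finite \'etale base change on $S$, to $\sO(S)$-point(s). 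Passing to an \'etale cover of the moduli on which the universal family of representations exists, and choosing a prime $\ell_0$ invertible on $S$, we obtain a tame $\ell_0$-adic local system on $X_{S'}$ for an \'etale cover $S' \to S$. For a closed point $s \in |S^\circ|$ of residue characteristic $p \neq \ell_0$, its restriction to $X_{\bar s}$ yields the desired arithmetic tame $\ell_0$-adic local system $\mathbb L_{\ell_0, \bar s}$ of rank $r$ and determinant $\sL$, with semi-simplified monodromies $T_i^{ss}$ preserved under tame specialization, and absolutely irreducible over $\bar \Q_{\ell_0}$ by openness of absolute irreducibility on the moduli and persistence from the generic fiber after shrinking to $S^\circ$.

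For an arbitrary prime $\ell \neq p$, Drinfeld's theorem on the existence of $\ell$-adic companions, applied to $\mathbb L_{\ell_0, \bar s}$, produces the required $\mathbb L_{\ell, \bar s}$: characteristic polynomials of Frobenius at closed points match, hence so do the determinant and semi-simplified monodromies at infinity, and absolute irreducibility is preserved. The case $\ell = p$ in item (4) is handled by repeating the construction at the second closed point $s' \in |S^\circ|$ of residual characteristic $p' \neq p$, at which $\ell = p$ is now invertible. The \emph{resp.} variant of the conclusion follows by applying the same construction to infinitely many pairwise non-isomorphic $\sO(S)$-points of $M_B^{\mathrm{irr}}$, with non-isomorphism on closed fibers preserved after further shrinking to a suitable open $S^\circ$. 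The main technical obstacle is the descent step of the base case: without any rigidity hypothesis, it is not automatic that the $\C$-point of $M_B^{\mathrm{irr}}$ descends to an $\sO(S)$-point with enough integral structure for the universal family to restrict meaningfully to closed fibers. Here Gaitsgory's proof of de Jong's conjecture (for $\ell \geq 3$) enters as the crucial input: when direct descent is obstructed, one extracts from an integral model a mod-$\ell$ arithmetic local system on $X_{\bar s}$, and Gaitsgory's lifting theorem produces the required characteristic-zero arithmetic lift, after which Drinfeld's companions propagate to all primes as above.
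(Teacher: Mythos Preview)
Your proposal has a genuine gap in the ``base case.'' A $\C$-point of the Betti moduli $M_B(X,r,\sL,T_i)$ need not descend to an $\sO(S)$-point (or even to any number ring) after any finite \'etale base change: the complex local system $\mathbb L_\C$ may be transcendental, so ``standard spreading-out'' does not apply. You recognize this at the end, but the proposed fix is too vague: you do not explain what ``integral model'' you extract a mod-$\ell$ local system from, why that mod-$\ell$ system is arithmetic on $X_{\bar s}$, or how de Jong's conjecture turns it into an arithmetic $\ell$-adic lift. Gaitsgory's theorem is not a lifting statement; it is the assertion that geometric deformations of an arithmetic mod-$\ell$ representation are trivial, and one needs Drinfeld's fixed-point argument on a formally smooth deformation space to convert it into the existence of an arithmetic $\bar\Z_\ell$-lift.

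The paper's argument avoids descent of $\mathbb L_\C$ entirely. The existence of $\mathbb L_\C$ is used only to conclude that $M_B(X,r,\sL,T_i)\to\Spec(\sO_K)$ is dominant; one then passes to a smooth open $M^\circ$ of the reduced scheme and picks a \emph{closed} point $z$ with residue field $\F_{\ell^m}$, $\ell\ge 3$. This gives an absolutely irreducible $\F_{\ell^m}$-local system with finite monodromy, which therefore descends through $sp_{\C,\bar s}$ to $X_{\bar s}$ and is automatically arithmetic. The formal completion of $M_B$ at $z$ is identified with Mazur's deformation space $D_{z,\bar s}(r,\sL,T_i)$, whose reduced structure is formally smooth over $W(\F_{\ell^m})$ by the choice of $z$; a power of Frobenius acts on it, and de Jong's conjecture (via Drinfeld) yields a Frobenius-fixed $\bar\Z_\ell$-point, i.e.\ the arithmetic $\ell$-adic lift. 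Only then do companions and the switch to $s'$ enter, as you describe. In the resp.\ case one does not have infinitely many $\sO(S)$-points either; instead one takes a positive-dimensional irreducible component $Z$ and uses Lang--Weil to produce infinitely many closed points $z_\alpha$ of $Z^\circ$ with residue fields $\F_{\ell^{mp^N}}$.
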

Theorem~\ref{thm:arithm} 5)  for the non-resp.\ case immediately  implies Theorem~\ref{thm:top} by considering the topological pull-backs  $(sp^{\rm top} _{\C,\bar s})^*(\mathbb L_{\ell, \bar s})$.

\medskip

In Theorem~\ref{thm:arithm} we can in addition  single out specific extra properties for $\mathbb L_\C$ and request that the  $(sp^{\rm top} _{\C,\bar s})^*(\mathbb L_{\ell, \bar s})$ keep the same properties.

\begin{thm} \label{thm:type}
\begin{itemize}
\item[i)]
If in Theorem~\ref{thm:arithm}, we assume that $\mathbb L_{\C}$
is cohomologically rigid, then  we can choose
$\mathbb L_{\ell, \bar s}$ such that the topological pull-back
$(sp^{\rm top} _{\C,\bar s})^*(\mathbb L_{\ell, \bar s})$
is cohomologically rigid.
\item[ii)] If in Theorem~\ref{thm:arithm} we assume that the Zariski closure
of the monodromy of $\mathbb L_\C$ contains  $\SL_{r}( \C)$, then
we can choose $\mathbb L_{\ell, \bar s}$ such that the topological pull-back
$(sp^{\rm top} _{\C,\bar s})^*(\mathbb L_{\ell, \bar s})$
has the same property. 
\end{itemize}
\end{thm}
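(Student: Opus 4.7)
The strategy in both parts is to re-examine the construction of $\mathbb L_{\ell,\bar s}$ in Theorem~\ref{thm:arithm} and verify that each extra hypothesis on $\mathbb L_\C$ can be preserved, first through the passage to $\mathbb L_{\ell,\bar s}$ on $X_{\bar s}$ and then under the topological pullback along the surjection $sp^{\rm top}_{\C,\bar s}$.

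\emph{For (i):} cohomological rigidity of an irreducible rank-$r$ local system with determinant $\sL$ and local monodromies $T_i$ amounts to the vanishing of a first parabolic cohomology group of the adjoint, equivalently to $[\mathbb L_\C]$ being an isolated reduced point of the Betti character variety $M_B$ of such systems. The scheme $M_B$ is of finite type, and its rigid locus $M_B^{\rm rig}$ is 0-dimensional, with $\bar\Q$-rational points. In the construction behind Theorem~\ref{thm:arithm} (spreading out, passage to $\ell$-adic sheaves, Drinfeld--L.~Lafforgue companions, and de~Jong--Gaitsgory), the companion can be arranged to remain in the Galois orbit of $[\mathbb L_\C]$ in $M_B$, hence in $M_B^{\rm rig}$. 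Cohomological rigidity of the topological pullback then follows by comparing the tame étale cohomology of $\mathrm{End}^0(\mathbb L_{\ell,\bar s})$ on $X_{\bar s}$ with the topological cohomology of its pullback on $X(\C)$ along $sp^{\rm top}_{\C,\bar s}$.

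\emph{For (ii):} the condition that the Zariski closure of the monodromy contains $\SL_r(\C)$ is an open condition on $M_B$, its complement being the constructible closed locus of systems factoring through a proper reductive subgroup of $\GL_r$. The geometric monodromy group of an $\ell$-adic local system on $X_{\bar s}$ is preserved under $\ell$-adic companions (after Katz), and is generically constant over the arithmetic base $S^\circ$ after spreading out; thus, after shrinking $S^\circ$ if necessary, the Zariski closure of the monodromy of $\mathbb L_{\ell,\bar s}$ contains $\SL_r$. Since $sp^{\rm top}_{\C,\bar s}$ is surjective onto $\pi_1^t(X_{\bar s},x_{\bar s})$, the Zariski closure of the monodromy of the topological pullback coincides with the geometric monodromy of $\mathbb L_{\ell,\bar s}$, proving the claim.

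The main technical obstacle lies in (i): one must ensure that the companion construction in Theorem~\ref{thm:arithm} can be arranged to land at an isolated point of $M_B$. This requires identifying the complex and $\ell$-adic character varieties of tame systems with the prescribed local data in a way that preserves scheme-theoretic local structure, so that the bijection of companions descends to an isomorphism of moduli in a neighborhood of an isolated point. For (ii) the subtleties are milder, amounting to a generic constancy statement for geometric monodromy groups over the arithmetic base $S^\circ$.
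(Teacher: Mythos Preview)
For part (i), the paper takes a different route than you propose. Rather than running the general Theorem~\ref{thm:arithm} construction (pick a closed point $z$ of $M_B$, apply de Jong--Gaitsgory, take companions), the paper follows \cite{EG18} directly: specialize $\mathbb L_\C$ itself via some $A \to \bar\Z_{\ell_0}$, descend to $X_{\bar s}$, and use cohomological rigidity twice---once to obtain arithmeticity on $X_{\bar s}$ \emph{without} invoking de Jong's conjecture, and once to see that the companions pull back to cohomologically rigid systems on $X(\C)$. Your assertion that ``the companion can be arranged to remain in the Galois orbit of $[\mathbb L_\C]$'' is precisely the heart of the matter and is not justified; indeed the paper notes explicitly in Remark~\ref{rmk:pigeon} that one cannot in general identify the output of the construction with a specialization of $\mathbb L_\C$. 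Your cohomology comparison between $X_{\bar s}$ and $X(\C)$ is correct but only yields an equivalence; it does not by itself establish that either side vanishes.

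For part (ii), you have the right ingredients---openness of the large-monodromy locus and its preservation under companions---but the mechanism you describe is off. The Theorem~\ref{thm:arithm} construction does not spread $\mathbb L_\C$ over $S^\circ$, so ``generic constancy of the monodromy group over $S^\circ$'' is not the relevant input; shrinking $S^\circ$ does nothing to control the monodromy of a system constructed at a single closed point $s$. The paper's argument is: take the Zariski closure $Z \subset M_B$ over $\sO_K$ of the small-monodromy locus $Z_K$ (closed by \cite{AB94}), set $W = M_B \setminus Z$, and in Section~\ref{pf:non-resp} choose the closed point $z$ inside the open set $W$. Then the Frobenius-fixed $\bar\Z_\ell$-point of the formal completion at $z$ automatically lies in $W$, so its $\bar\Q_\ell$-point lies in $W_K$ and has large monodromy. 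For the passage to other primes $\ell'$ one then invokes \cite{D'Ad20} on monodromy groups in compatible systems. The key point you are missing is that the freedom in Theorem~\ref{thm:arithm} lies in the choice of $z$ in the moduli space $M_B$, not in the choice of $s \in S^\circ$.
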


Theorem~\ref{thm:type} i)   follows  from  \cite[Theorem~1.1]{EG18} which is Simpson's integrality conjecture for cohomologically rigid local systems.  Its  method  of proof, developed  with M. Groechenig, is the starting point of this article.  We comment on this in Remark~\ref{rmk:pigeon}. Theorem~\ref{thm:type} ii) answers positively a question by A. Landesman  asked to us after the second named author lectured on Theorem~\ref{thm:arithm} in Harvard in October  2022.

\medskip
 
We now describe the method of proof of the non-resp. case of Theorem~\ref{thm:arithm}, see Section~\ref{sec:top}.  We make use of the fact that the Betti moduli space 
 $M_B(X,r,\sL, T_i)$  parametrizing irreducible local systems of rank $r$  with prescribed  torsion determinant  $\sL$ and quasi-unipotent monodromies  at infinity  in $T_i$ 
   is of finite type over a number ring $\sO_K$, see Section~\ref{sec:Betti}. We denote by  $M_B(X,r,\sL, T^{ss}_i)$  the disjoint union of the finitely many $M_B(X,r,\sL, T_i)$ so the semi-simplification of $T_i$ is $T_i^{ss}$ for all $i$. 
   The existence of an irreducible rank $r$ complex local system $\mathbb L_\C$  is equivalent to $M_B(X,r,\sL, T_i)$ being dominant over $\sO_K$.  By generic smoothness,  the underlying reduced scheme $ M_B(X,r,\sL,T_i)_{red}\subset M_B(X,r,\sL, T_i)$ is smooth over $\sO_K$ on some non-trivial open subscheme. We pick  a closed point $z$ in this locus, with residue field $\F_{\ell^m}$ for $\ell \ge 3$. This defines an $\F_{\ell^m}$-local system   on   $X_\C$. By Grothendieck's theory of the specialization of the \'etale fundamental group of $X_\C$ to the tame one in characteristic $p>0$,  this $\F_{\ell^m}$-local system descends to the mod $p$ reduction $X_{\bar \F_p}$  for $p$ large prime to $\ell$, and as the monodromy is finite, to $X_{\F_q}$ where $q=p^s$ for some $s\in \N_{>0}$. 
The completion of  $M_B(X,r,\sL, T_i)$ at $z$ is identified  with Mazur's deformation space of $z$ keeping the same conditions $(\sL, T_i)$. We can then apply de Jong's conjecture
 to the effect that $z$   lifts to an $\ell$-adic  local system  $\mathbb L_{\ell, \bar \F_p} $ which is artihmetic   and  is still absolutely irreducible.
  We now apply the existence of companions for any $\ell'\neq p$. This yields arithmetic 
 $\ell'$-adic local systems  $\mathbb L_{\ell', \bar \F_p}$ on $X_{\bar \F_p}$ with the same determinant and monodromies at infinity, modulo semi-simplification. Pulling-back those to  $X(\C)$ yields a point in $M_B(X,r,\sL, T_i^{ss})(\bar \Z_{\ell'})$. 
  This proves the theorem for $\ell\neq p$. At $p$ we just choose a different specialization to $X_{\bar \F_{p'}}$ for $p\neq p'$.

 In the resp. case we do the same replacing $M_B(X,r,\sL, T_i)$ by one component, dominant over  $\sO_K$,  which over $\C$ contains infinitely many of them. 
 
 \medskip
For Theorem~\ref{thm:type} ii), we use in addition to Theorem~\ref{thm:arithm}  the fact that the existence of $\mathbb L_\C$ with Zariski dense monodromy implies that the locus in  $M_B(X,r, \sL, T_i)(\C)$ of complex points with Zariski dense monodromy is Zariski dense, see Section~\ref{sec:dens}.
 
\medskip

The method of proof described above invites us to define the notion of a   {\it weakly arithmetic complex  local system} $\mathbb L_\C$:  there is an identification of $\C$ with $\bar \Q_\ell$ such that the resulting topological local system $\mathbb L_{\bar \Q_\ell}$ defined over $\bar \Q_\ell$ is in fact $\ell$-adic and descends to an arithmetic  $\ell$-adic local system on some reduction $X_{\bar \F_p}$ mod $p$ (see Definition~\ref{defn:arithm}). The method of proof described above enables us to show that the weakly arithmetic complex local systems with a fixed determinant $\sL$ are dense in the Betti moduli space parametrizing irreducible complex local systems  $M_B(X,r,\sL)$ of rank $r$ and determinant $\sL$. In fact the proof does not use the companions in characteristic $p>0$, instead it uses the invariance of this locus by complex conjugation over $\C$, see Theorem~\ref{thm:dens}.

\medskip

 To summarize, in Section~\ref{sec:top} we prove  Theorem~\ref{thm:top} and Theorem~\ref{thm:arithm}, non-resp. case. In  Section~\ref{sc:dens} we define the notion of weakly arithmetic complex local systems and prove their density in the Betti moduli.  In Section~\ref{sec:obst} we prove Theorem~\ref{thm:obs}.  In Section~\ref{sec:resp} we prove  the resp. case of Theorem~\ref{thm:arithm}. In Section~\ref{sec:dens} we prove Theorem~\ref{thm:type} ii).  Finally in Section~\ref{sec:RQ} we make some comments, formulate some questions, and, as a curiosity, we give a proof  of the theorem by Corlette and T. Mochizuki that a morphism between normal complex varieties respects semi-simplicity of local systems. The original proof uses the harmonic theory. Our proof uses de Jong's conjecture (and not the companions).

\medskip 
\noindent
{\it Acknowledgements:}   The article makes use  of the companions over a finite field for a problem on complex local systems. This idea   has been developed in \cite{EG20}. We thank Michael Groechenig for the discussions we had at the time, which impacted a whole development afterwards. We thank Alexander Petrov for general enlightening discussions on his work and ours. We thank Mark Kisin, Aaron Landesman  and Will Sawin for interesting questions and answers to ours. We thank Emmanuel Breuillard for kindly  writing down for us the example documented in  Section~\ref{BBV}, which shows that our weak integrality property for a finitely presented group is indeed an obstruction for this group to come from algebraic geometry.   We  warmly thank the referee for a thorough, precise  and helpful report which helped us to improve  the presentation of our article and correct a mistake in the proof of Theorem~\ref{thm:Moc}.

\section{Proof of Theorems ~\ref{thm:top} and ~\ref{thm:arithm}, non-resp.\ case} \label{sec:top}

\subsection{The Betti moduli space} \label{sec:Betti}
Let $(X \hookrightarrow \bar X, r, D_i, T_i, \sL)$ be the  notation used in Theorem~\ref{thm:top}.
The datum $(X, r, \sL, T_i)$ defines a number field $K$
and its ring of integers $\sO_K$ over which $\sL$ and the
eigenvalues of the $T_i$ are defined. There is an algebraic stack
$\sM(X, r, \sL, T_i)$  of finite type over $\Spec(\sO_K)$
parametrizing irreducible local systems on $X$ of rank $r$,
with determinant $\sL$, and monodromies at infinity in $T_i$. 
See \cite[2.1]{Dri01} where it is denoted by ${\rm Irr}_r^X$, and
in \cite[Section~2]{EG18} where the determinant and the conditions
at infinity are taken into account, it is denoted by
$\underline{M}$.  See also the footnote \footnote{There is a typo defining the irreducibility condition,
which on p.~4282 should be tested not only on geometric generic points but
on all geometric points.}. We denote by $M_B(X, r, \sL, T_i)$
the associated coarse moduli space, and call it the Betti moduli space.
In the proof of the following lemma we will see that it exists and is
a separated scheme of finite type over $\Spec(\sO_K)$.

\begin{lem}
\label{lem:gerbe}
The morphism $\sM(X, r, \sL, T_i) \to M_B(X, r, \sL, T_i)$
exhibits the source as a $\G_m$-gerbe over the target.
\end{lem}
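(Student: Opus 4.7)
The plan is to realize $\sM(X, r, \sL, T_i)$ as a quotient stack and then read off the gerbe structure from the central extension $1 \to \G_m \to \GL_r \to \PGL_r \to 1$. Since $\pi_1(X(\C), x)$ is finitely presented, a choice of presentation and of a framing at $x$ produces a separated scheme $R^{\rm fr}$ of finite type over $\sO_K$ parametrizing homomorphisms $\rho:\pi_1(X(\C),x)\to \GL_r$ with determinant $\sL$ and with each monodromy around $D_i$ lying in the quasi-unipotent conjugacy class $T_i$: these are cut out inside a product of copies of $\GL_r$ by the group relations, the closed condition on $\det\rho$, and the locally closed conditions specifying the Jordan type of the $T_i$. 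Conjugation defines a $\GL_r$-action on $R^{\rm fr}$, and the absolutely irreducible locus $R\subset R^{\rm fr}$ is open. By construction $\sM(X, r, \sL, T_i)=[R/\GL_r]$.

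The center $\G_m\subset \GL_r$ acts trivially on $R^{\rm fr}$, so the action factors through $\PGL_r$. On $R$, Schur's lemma applied pointwise shows that any $g\in\GL_r$ centralizing an irreducible $\rho$ is scalar, hence $\PGL_r$ acts with trivial stabilizers. By the standard GIT construction of character varieties of irreducible representations (see \cite{Dri01} and \cite[Section~2]{EG18}), the quotient $R/\PGL_r$ exists as a separated scheme of finite type over $\sO_K$, and this is $M_B(X, r, \sL, T_i)$. The coarse moduli map therefore factors as
\[
[R/\GL_r]\longrightarrow [R/\PGL_r]\xrightarrow{\ \sim\ } M_B(X, r, \sL, T_i),
\]
with the second arrow an isomorphism because the $\PGL_r$-action on $R$ is free.

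It remains to identify the first arrow as a $\G_m$-gerbe. I would invoke the general principle that given any central extension $1\to Z\to G\to H\to 1$ of affine group schemes and any $G$-scheme $Y$ on which $Z$ acts trivially, the induced morphism $[Y/G]\to [Y/H]$ is a $Z$-gerbe: its fiber over an $H$-torsor $P\to T$ is the stack of $G$-liftings of $P$, which is a banded $Z$-gerbe because such liftings exist fppf-locally and the sheaf of liftings is a $Z$-torsor. Applying this with $(Z,G,H,Y)=(\G_m,\GL_r,\PGL_r,R)$ produces the desired $\G_m$-gerbe structure on $[R/\GL_r]\to [R/\PGL_r]$.

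The main obstacle, and indeed the only nontrivial input, is the existence of $M_B(X,r,\sL,T_i)$ as a separated scheme of finite type over $\sO_K$; once this is granted, the gerbe assertion is formal from Schur's lemma and the central extension $1\to\G_m\to\GL_r\to\PGL_r\to 1$. I would handle this by reducing to affine GIT for the free action of the reductive group $\PGL_r$ on the affine scheme $R$, relying on the explicit presentation of $R$ coming from the finite presentability of $\pi_1(X(\C),x)$ and the locally closed boundary conditions $T_i$.
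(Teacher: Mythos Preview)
Your approach is essentially the same as the paper's: both realize the stack as $[R/\GL_r]$ for a framed representation scheme $R$, invoke Schur to see that $\PGL_r$ acts freely on the irreducible locus, and then read off the $\G_m$-gerbe structure from the central extension $1\to\G_m\to\GL_r\to\PGL_r\to 1$. The gerbe argument itself is handled identically.

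There is, however, a gap in your treatment of the existence of $M_B(X,r,\sL,T_i)$ as a separated scheme. In your final paragraph you propose to ``reduce to affine GIT for the free action of $\PGL_r$ on the affine scheme $R$,'' but $R$ is \emph{not} affine: you yourself note earlier that the $T_i$-conditions are only locally closed (when $T_i$ is not semisimple), and the absolute irreducibility condition is open. So affine GIT does not apply directly, and the references you cite do not supply this step either. The paper addresses this more carefully: it first obtains $[R/\PGL_r]$ as an algebraic \emph{space} from the freeness of the action (Stacks Project Tag 06PH), then proves separatedness via the valuative criterion (using that irreducible mod-$\pi$ representations have a unique lattice up to scaling), and finally proves it is a scheme by exhibiting a quasi-finite morphism to affine space built from characteristic polynomials on finitely many group elements, appealing to Brauer--Nesbitt and Chenevier's finite determination of pseudocharacters. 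This is the only substantive divergence; your strategy is otherwise on target.
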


\begin{proof}
Most of the steps in this proof are justified in \cite{WE18} and \cite{EG18};
we only add arguments for the parts which are not shown there. Choose
generators $\gamma_1, \ldots, \gamma_n$ for the topological fundamental
group $\Gamma = \pi_1(X(\C), x(\C))$. Then there is a
locally closed (closed
if the conjugacy classes $T_i$ are semi-simple)  subscheme
$M^\square \subset (\GL_{r, \sO_K})^n$ with the following property:
for any $\sO_K$-algebra $R$, the $R$-points of $M^\square$ correspond
bijectively to homomorphisms $\rho : \Gamma \to \GL_r(R)$ whose associated
local system on $X$ defines an $R$-point of the stack $\sM(X, r, \sL, T_i)$. 
The correspondence sends $\rho$ to $(\rho(\gamma_1), \ldots, \rho(\gamma_n))$.
We obtain a morphism $M^\square \to \sM(X, r, \sL, T_i)$.
There is an action of the group scheme $G = \GL_{r, \sO_K}$ on $M^\square$
over $\Spec(\sO_K)$, and we have
$$
\sM(X, r, \sL, T_i) = [M^\square / G]
$$
as algebraic stacks. Since $M^\square$ is of finite type over
$\Spec(\sO_K)$ the same is true for $\sM(X, r, \sL, T_i)$. The action
of $G$ on $M^\square$ factors through an action of the group scheme
$\bar G = \PGL_{r, \sO_K}$ on $M^\square$ over $\Spec(\sO_K)$.
Since for every algebraically closed field $k$,
every representation $\rho : \Gamma \to \GL_r(k)$ corresponding to
a $k$-point of $M^\square$ is  by definition irreducible, we see that the action of
$\bar G$ on $M^\square$ is scheme theoretically free.
Hence the quotient
$$
M_B(X, r, \sL, T_i) = [M^\square / \bar G]
$$
is an algebraic space by
\cite[\href{https://stacks.math.columbia.edu/tag/06PH}{Tag 06PH}]{SP}.
Since $M^\square$ is of finite type over $\Spec(\sO_K)$ the same
is true for $M_B(X, r, \sL, T_i)$. For any scheme $T$ endowed
with an action of $\bar G$ the morphism $[T/G] \to [T/\bar G]$
is a $\mathbb G_m$-gerbe.
Although in the rest of the article we never use anything beyond
the facts already proven (at the cost of working with algebraic
spaces in addition to schemes), below we briefly indicate why
$M_B(X, r, \sL, T_i)$ is separated and a scheme. This is standard
but we have not been able to find a reference in the literature.

\medskip\noindent
To show that $M_B(X, r, \sL, T_i)$ is separated is equivalent
to proving that the action of $\bar G$ on $M^\square$ is closed, i.e.,
that the morphism
$\Psi : \bar G \times M^\square \to M^\square \times M^\square$
is a closed immersion. Freeness of the action means that $\Psi$ is
a monomorphism. By
\cite[\href{https://stacks.math.columbia.edu/tag/04XV}{Tag 04XV}]{SP}
it suffices to show that $\Psi$ is universally closed. To check this
in turn by
\cite[\href{https://stacks.math.columbia.edu/tag/04XV}{Tag 04XV}]{SP}
it suffices to check the existence part of the valuative criterion
for discrete valuation rings. Unwinding the definitions this boils
down to the following: given a discrete valuation ring $R$ with
uniformizer $\pi$, residue field $k$, and fraction field $L$,
given two homomorphisms $\rho_1, \rho_2 : \Gamma \to \GL_r(R)$ such that
$\bar \rho_1 : \Gamma \to \GL_r(k)$ is irreducible,
if $\rho_1$ and $\rho_2$ are isomorphic as representations over $L$,
then $\rho_1$ and $\rho_2$ are isomorphic as representations over $R$.
This follows from the fact that all $\Gamma$-$\rho_1$-invariant
lattices in $L^r$ are of the form $\pi^nR^r$,  using irreducibility and
integrality of $\rho_i$.

\medskip\noindent
By \cite[\href{https://stacks.math.columbia.edu/tag/03XX}{Tag 03XX}]{SP}
if we can construct a quasi-finite morphism $M_B(X, r, \sL, T_i) \to N$
to a scheme $N$, then $M_B(X, r, \sL, T_i)$ is a scheme. Let
$\Omega \subset \Gamma$ be a finite subset. Given
$\gamma \in \Omega$ we can associate to a representation $\rho$ of $\Gamma$
the characteristic polynomial of $\rho(\gamma)$. This defines a $G$-invariant
morphism $M^\square \to \prod_{\gamma \in \Omega} \A^r_{\sO_K}$
and hence a morphism
$$
M_B(X, r, \sL, T_i) \to \prod\nolimits_{\gamma \in \Omega} \A^r_{\sO_K}
$$
We claim  that if  $\Omega$ is large enough,  this morphism is quasi-finite onto its image 
 and the set of  field value points of its fibres is either empty or consists of one point. This  then
 finishes the proof. Namely, by the Brauer-Nesbit theorem the
isomorphism class of an irreducible representation $\rho$
over an algebraically closed field $k$
is determined by its character (see for example
\cite[Theorem~7.20]{Lam91}). On the other hand, since $\Gamma$ is finitely
generated, the pseudocharacter $\gamma \mapsto \det(T - \rho(\gamma))$
for any representation $\rho$ of fixed rank $r$ is determined by the
values on finitely many elements of $\Gamma$ for example
by \cite[Proposition~2.38]{Che14}.
\end{proof}

\subsection{Proof of Theorem~\ref{thm:arithm}, non-resp.~case}
\label{pf:non-resp}
Let $(S, X_S \hookrightarrow \bar X_S, x_S)$ be as in the introduction.
The existence of a point $\mathbb L_\C$ in $M_B(X,r,\sL,T_i)(\C)$
in the assumption of Theorem \ref{thm:arithm} tells us that
the structure morphism
\ga{}{ \epsilon: M_B(X,r,\sL,T_i)\to  {\rm Spec}(\sO_K) \notag}
is dominant. By generic smoothness, there is a non-empty open subscheme
$M^\circ \subset M_B(X, r, \sL, T_i)_{red}$ of the reduced scheme  such that
\ga{}{\epsilon|_{M^\circ}: M^\circ\to {\rm Spec}(\sO_K) \notag}
is smooth, dominant, and has values in $ {\rm Spec}(\sO_K) ^\circ$ where $ {\rm Spec}(\sO_K) ^\circ \to {\rm Spec}(\Z)$ is smooth over its image.
Let $z \in |M^\circ|$ be a closed point, so of residue field
$\F_{\ell^m}$ for a prime number $\ell \ge 3$ and some $m \in \N_{>0}$.
By Lemma \ref{lem:gerbe} and the vanishing of the Brauer group
of a finite field, 
we see that $z$ corresponds to an
absolutely irreducible local $\F_{\ell^m}$-system $\mathbb L_z$ over $X$.

\medskip
We define $S^\circ\subset S$ to be the non-empty open subscheme  which is the
complement of closed points of residual characteristic dividing
the order of $\GL_r(\F_{\ell^m})$. We claim $S^\circ$ satisfies the assertions
of Theorem~\ref{thm:arithm}. Pick $s \in |S^\circ|$ of characteristic $p$
and consider the diagram
\ga{}{sp^{\rm top} _{\C,\bar s} :
\pi_1(X(\C), x(\C))\to \pi_1(X_\C, x_\C)
\xrightarrow{sp_{\C, \bar s}}
\pi_1^t(X_{\bar s}, x_{\bar s}) \notag}
from the introduction. Since $sp_{\C,\bar s}$ is an isomorphism on prime to $p$
quotients, we see that $\mathbb L_z$ gives rise to an
absolutely irreducible local $\F_{\ell^m}$-system $\mathbb L_{z, \bar s}$
over $X_{\bar s}$. Its determinant is $\sL$ and its
monodromies at infinity are in $T_i$ by the compatibility of $sp_{\C, \bar s}$
with the local fundamental groups, see \cite[Section~1.1.10]{Del73}.

\medskip
Let $D_{z, \bar s} = {\rm Spf}(R_{z, \bar s})$
where $R_{z, \bar s}$ is Mazur's formal deformation ring
of the rank $r$ representation $\rho_{z, \bar s}$ of
$\pi_1^t(X_{\bar s}, x_{\bar s})$
over $\F_{\ell^m}$ corresponding to $\mathbb L_{z, \bar s}$
(\cite[Proposition~1]{Maz89}). Consider the formal closed
subscheme $D_{z, \bar s}(r, \mathcal{L}, T_i) \subset D_{z, \bar s}$
corresponding to deformations where the universal deformation has
determinant $\sL$ and monodromies at infinity in $T_i$ (it 
is indeed a closed condition.)  By construction
we obtain a morphism
\ga{}{D_{z, \bar s}(r, \mathcal{L}, T_i) \to
\sM(X, r, \sL, T_i) \to M_B(X, r, \sL, T_i)\notag}
Thus we obtain
\ga{}{D_{z, \bar s}(r, \mathcal{L}, T_i)
\xrightarrow{\iota} M_B(X, r, \sL, T_i)^\wedge_z \notag}
where $(-)^\wedge_z$ indicates the formal completion at $z$.   

\begin{prop}
\label{prop:iota}
The morphism $\iota$ is an isomorphism.
\end{prop}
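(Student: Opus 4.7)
The plan is to identify both $M_B(X, r, \sL, T_i)^\wedge_z$ and $D_{z, \bar s}(r, \sL, T_i)$ with a common framed deformation space modulo the natural action of $\bar G^\wedge_e$, the formal completion at the identity of $\bar G = \PGL_{r, \sO_K}$.

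First I would interpret the Betti side. By Lemma~\ref{lem:gerbe} the morphism $M^\square \to M_B(X, r, \sL, T_i)$ is a $\bar G$-torsor, and since $\bar G$-torsors over the finite field $\F_{\ell^m}$ are trivial I may lift $z$ to a point $z^\square \in M^\square(\F_{\ell^m})$; such a lift is precisely a framing of $\rho_z$. Passing to formal completions, $M_B(X, r, \sL, T_i)^\wedge_z$ is canonically identified with $(M^\square)^\wedge_{z^\square}/\bar G^\wedge_e$. Next, on the deformation side, I would introduce the framed analogue $D_{z, \bar s}^\square(r, \sL, T_i)$ parametrizing framed continuous lifts of $\rho_{z, \bar s}$ satisfying the $(\sL, T_i)$ conditions. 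Since $\rho_{z, \bar s}$ is absolutely irreducible, its scheme-theoretic stabilizer inside $\GL_r$ is the center $\G_m$, so the forgetful morphism $D_{z, \bar s}^\square(r, \sL, T_i) \to D_{z, \bar s}(r, \sL, T_i)$ is a $\bar G^\wedge_e$-torsor.

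The heart of the argument is then to produce a $\bar G^\wedge_e$-equivariant identification
\eq{eq:key-fram}{
(M^\square)^\wedge_{z^\square} \;\cong\; D_{z, \bar s}^\square(r, \sL, T_i).
}
For any Artinian quotient $R$ with residue field $\F_{\ell^m}$, an $R$-point of the left hand side is a framed lift of $\rho_z$ to a homomorphism $\pi_1(X(\C), x(\C)) \to \GL_r(R)$ obeying the conditions $(\sL, T_i)$, while an $R$-point of the right hand side is a framed continuous lift of $\rho_{z, \bar s}$ to $\pi_1^t(X_{\bar s}, x_{\bar s}) \to \GL_r(R)$ obeying the same conditions. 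The kernel of $\GL_r(R) \to \GL_r(\F_{\ell^m})$ is a finite $\ell$-group, so by our choice of $S^\circ$ any such lift factors through a finite quotient of order prime to $p = \Ch(s)$. Pullback along $sp^{\rm top}_{\C, \bar s}$ then gives a bijection between the two sets of framed lifts, because by Grothendieck's theorem $sp_{\C, \bar s}$ is an isomorphism on pro-prime-to-$p$ quotients (\cite[Expos\'e~XIII, Corollaire~2.12]{SGA1}). This bijection is manifestly $\bar G^\wedge_e$-equivariant since $\bar G^\wedge_e$ acts on both sides by change of framing.

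The step I expect to require the most care is the verification that the side conditions $(\sL, T_i)$ transport cleanly across the bijection. For the determinant $\sL$ this is immediate from the functoriality of characters. For the local monodromies at infinity, the argument rests on the compatibility of $sp_{\C, \bar s}$ with the inclusions of the tame local inertia subgroups at each irreducible component $D_i$, as recorded in \cite[Section~1.1.10]{Del73}; since the eigenvalues of the $T_i$ have orders invertible on $S^\circ$, a given lift has its $i$-th monodromy conjugacy class in $T_i$ on one side if and only if it does on the other. Combining the identification~\eqref{eq:key-fram} with the two quotient descriptions above then yields the desired isomorphism $\iota$.
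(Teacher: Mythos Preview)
Your proof is correct and rests on the same two ingredients as the paper's: (i) any lift to an Artinian local $R$ with residue field $\F_{\ell^m}$ lands in a group of order prime to $p$, so $sp_{\C,\bar s}$ transports lifts bijectively between the topological side and the tame geometric side; and (ii) the compatibility of $sp_{\C,\bar s}$ with local inertia (\cite[Section~1.1.10]{Del73}) makes the conditions $(\sL,T_i)$ match.

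The one difference is in how the framing ambiguity is disposed of. The paper never introduces framed deformations: it invokes Lemma~\ref{lem:gerbe} directly, observing that the Brauer group of $R$ is trivial, so the $\G_m$-gerbe $\sM(X,r,\sL,T_i)\to M_B(X,r,\sL,T_i)$ splits over $\Spec(R)$ and an $R$-point of $M_B^\wedge_z$ lifts to a \emph{well-defined isomorphism class} of local $R$-systems, which is then pushed through the specialization isomorphism. You instead work on the $\bar G$-torsor $M^\square$, establish the identification $(M^\square)^\wedge_{z^\square}\cong D_{z,\bar s}^\square(r,\sL,T_i)$ at the framed level, and then divide out by $\bar G^\wedge_e$. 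Your route is a bit longer but has the virtue of keeping everything at the level of concrete homomorphisms $\Gamma\to\GL_r(R)$; the paper's route is shorter but leans on the gerbe language. The two are standard reformulations of each other.
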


\begin{proof}
Let $R$ be an Artinian local ring whose residue field is identified
with the residue field $\F_{\ell^m}$ of $z$. To construct the inverse
to $\iota$ we will show that morphisms 
\ga{}{ m : \Spec(R) \to M_B(X, r, \sL, T_i) \notag}
which send the closed point to $z$, are in one to one correspondence 
with deformations of $\rho_{z, \bar s}$ in $D_{z, \bar s}(r, \mathcal{L}, T_i)$.
There are two steps.
First, by Lemma \ref{lem:gerbe} the morphism $m$ lifts to a morphism
$m : \Spec(R) \to \sM(X, r, \sL, T_i)$ into the stack (as the Brauer
group of $R$ is trivial) and moreover the isomorphism class
of the lift is well defined. This lift defines a local $R$-system
$\mathbb L_m$ on $X$ such that
$\mathbb L_R \otimes_R \mathbb F_{\ell^m} = \mathbb L_z$.
Second, by exactly the same arguments as above, this
descends to a local $R$-system $\mathbb L_{m, \bar s}$ on $X_{\bar s}$
(unique up to isomorphism) with determinant $\sL$ and monodromies
in $T_i$ at infinity. The corresponding continuous representation
$\rho_{R, \bar s} : \pi_1^t(X_{\bar s}, x_{\bar s}) \to \GL_r(R)$
is the desired deformation.
\end{proof}

\begin{cor}
\label{cor:sm}
The reduced deformation space $D_{z, \bar s}(r, \mathcal{L}, T_i)_{red}$
is smooth over $ {\rm Spf}(W(\F_{\ell^m}))$.
\end{cor}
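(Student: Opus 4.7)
The plan is to combine Proposition~\ref{prop:iota} with the smoothness that was built into the choice of the open subscheme $M^\circ$, and then to invoke the Cohen structure theorem for formally smooth complete Noetherian local rings.

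First, starting from the isomorphism $\iota:D_{z,\bar s}(r,\sL,T_i)\xrightarrow{\sim} M_B(X,r,\sL,T_i)^\wedge_z$ of Proposition~\ref{prop:iota}, I would use that the nilradical of a Noetherian local ring commutes with $\mathfrak{m}$-adic completion to descend $\iota$ to an isomorphism
\[
D_{z,\bar s}(r,\sL,T_i)_{red}\xrightarrow{\sim}\bigl(M_B(X,r,\sL,T_i)_{red}\bigr)^\wedge_z.
\]
Since $z$ lies in the open subscheme $M^\circ\subset M_B(X,r,\sL,T_i)_{red}$, the right hand side agrees with $(M^\circ)^\wedge_z$, so the task reduces to showing that $(M^\circ)^\wedge_z$ is smooth over $\Spf(W(\F_{\ell^m}))$.

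Second, the two smoothness statements from Section~\ref{pf:non-resp}, namely that $\epsilon|_{M^\circ}:M^\circ\to\Spec(\sO_K)$ is smooth and that $\Spec(\sO_K)^\circ\to\Spec(\Z)$ is smooth over its image, compose to give a smooth morphism $M^\circ\to\Spec(\Z)$ at $z$. Letting $\mathfrak{p}$ denote the image of $z$ in $\Spec(\sO_K)$, the second smoothness forces $\sO_K$ to be unramified at $\mathfrak{p}$, so the completion $\widehat{\sO}_{\sO_K,\mathfrak{p}}$ is $W(\kappa(\mathfrak{p}))$. In particular, the completion $\sO_{M^\circ,z}^\wedge$ is a formally smooth complete Noetherian local $\Z_\ell$-algebra with residue field $\F_{\ell^m}$.

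Finally, by the Cohen structure theorem applied to this formally smooth $\Z_\ell$-algebra (the residue field extension $\F_{\ell^m}/\F_\ell$ being automatically separable), there is a $W(\F_{\ell^m})$-algebra isomorphism
\[
(M^\circ)^\wedge_z\;\simeq\;W(\F_{\ell^m})[[t_1,\ldots,t_d]],
\]
with $d$ the relative dimension of $M^\circ$ over $\Spec(\sO_K)$ at $z$. The right hand side is manifestly smooth over $\Spf(W(\F_{\ell^m}))$. The only delicate point will be to check that the $W(\F_{\ell^m})$-algebra structure coming from Mazur's construction of $R_{z,\bar s}$ agrees with the one appearing in the Cohen presentation above; both are determined by the unique unramified lift of the residue field $\F_{\ell^m}$ inside the complete local ring, which makes the compatibility automatic.
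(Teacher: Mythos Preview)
Your argument follows the same line as the paper's proof, which is simply to invoke Proposition~\ref{prop:iota} together with the smoothness of $M^\circ \to \Spec(\sO_K)$ and of $\Spec(\sO_K)^\circ \to \Spec(\Z)$; you have just unpacked the steps more explicitly (identifying the completed local ring and invoking Cohen's structure theorem), which is fine.

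One small correction: the assertion that ``the nilradical of a Noetherian local ring commutes with $\mathfrak m$-adic completion'' is not true in general---there exist Noetherian local domains whose completions are non-reduced. What is true, and suffices here, is that $M_B(X,r,\sL,T_i)$ is of finite type over $\Z$ and hence excellent, so the completion of its reduced local ring at $z$ remains reduced. Alternatively (and more directly in your setup), you already know that $(M^\circ)^\wedge_z$ is regular, hence a domain; since it is the quotient of $\widehat A := \sO_{M_B,z}^\wedge$ by $\mathrm{nil}(A)\widehat A$, this forces $\mathrm{nil}(A)\widehat A = \mathrm{nil}(\widehat A)$ and gives the identification you want without appealing to any general commutation statement.
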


\begin{proof}
This follows as by the choices made above the morphisms
\ga{}{ M^\circ \to {\rm Spec}(\sO_K) \ {\rm 
and}  \ {\rm Spec}(\sO_K)^\circ \to {\rm Spec}(\Z)\notag} 
are smooth and the scheme $M^\circ$ is an open subscheme of the
$M_B(X, r, \sL, T_i)_{red}$.
\end{proof}

\medskip
Recall that $D_{z, \bar s}(r, \mathcal{L}, T_i)$ is a deformation space
for the residual representation $\rho_{z, \bar s}$ of
$\pi_1^t(X_{\bar s}, x_{\bar s})$. We have the  homotopy exact sequence
$$
1 \to
\pi_1^t(X_{\bar s}, x_{\bar s}) \to
\pi_1^t(X_s, x_s) \to
\Gal(\bar s/ s) \to 1
$$
of profinite groups. Denote $\Phi \in \Gal(\bar s/ s)$ the Frobenius of $s$.
By the outer action of $ \Gal(\bar s/ s) $ on $\pi_1^t(X_{\bar s}, x_{\bar s})$,
 we see that $\Phi$ acts on the set of isomorphism classes of
representations of $\pi_1^t(X_{\bar s}, x_{\bar s})$.
Since $\rho_{z, \bar s}$ has finite image, a power $\Phi^n$ for some $n \geq 1$
of $\Phi$ stabilizes it. Thus  there is an action of $\Phi^n$ on $D_{z, \bar s}$.
 On the other hand,  $\Phi(\sL)$ is another torsion local system of the same order. Since there are finitely many of them, a power $\Phi^{nm}$ for some $m\ge 1$ stabilizes 
  $\sL$.  By 
\cite[XIV.1.1.10]{SGA7.2} (see also \cite[Lemma 7.1]{EK22}), the action of $\Phi$ on 
the conjugacy class $t_i$ of the  monodromies at infinity is via the cyclotomic character. 
 So $\Phi$ acts on the set of conjugacy classes of quasi-unipotent matrices 
in $GL_r(\bar \Q_\ell)$ with eigenvalues being powers of the eigenvalues of the $T_i$.  Thus for some $t\ge 1$, $\Phi^{nmt}$ stabilizes  $(D_{z, \bar s}, \sL, T_i)$ and thus the formal closed subscheme
$D_{z, \bar s}(r, \sL, T_i) \hookrightarrow D_{z,\bar s}(r)$. We
abuse  notation setting $n = nmt$. So,  $\Phi^n$ acts on
$D_{z, \bar s}(r, \sL, T_i)$.

\medskip
We now apply de Jong's conjecture \cite[Conjecture~2.3]{dJ01}, proved
by Gaitsgory \cite{Gai07} for $\ell\ge 3$, in the way Drinfeld
did in \cite[Lemma~2.8]{Dri01}: Corollary~\ref{cor:sm} implies that
there is a $\bar \Z_\ell$-point of $D_{z, \bar s}(r, \sL, T_i)_{red}$
which is invariant under $\Phi^n$. This corresponds to an irreducible
$\ell$-adic local system $\mathbb L_{z, \bar s, \ell}$ on $X_{\bar s}$,
which descends to a Weil sheaf, thus by  \cite[Proposition~1.3.14]{Del80} to an 
arithmetic \'etale (we just say arithmetic in the sequel) local system
with determinant $\sL$ and monodromies $T_i$ at infinity.
This yields Theorem~\ref{thm:arithm} 1), 2), 3) 5) for this one $\ell$.

\medskip
We now use the method of \cite[Section~3]{EG18}.
For any $\ell' \neq p$ and any algebraic isomorphism
$\sigma : \bar \Q_\ell \cong \bar \Q_{\ell'}$, we denote
$\mathbb L_{z, \bar s, \ell}^\sigma$ the restriction to $X_{\bar s}$
of the companion of the arithmetic descent of $\mathbb L_{z, \bar s, \ell}$.
It is an $\ell'$-adic local system on $X_{\bar s}$
which is irreducible over $\bar \Q_{\ell'}$
(\cite[Proof~of~Theorem~1.1]{EG18}), has determinant $\sL$
by compatibility of companions and exterior powers,
and monodromies at infinity $T_{z, \bar s, \ell', i}$
such that $T_i^{ss} = T_{z, \bar s, \ell', i}^{ss}$
by \cite[Th\'eor\`eme~9.8]{Del72}. We set
$\mathbb L_{\ell, \bar s} = \mathbb L_{z, \bar s, \ell}$ and
$\mathbb L_{\ell', \bar s} = \mathbb L_{z, \bar s, \ell}^\sigma$
for all $\ell' \neq p, \ell$. For $\ell = p$ we just choose another
closed point $s'$ in $S^\circ$ of residual characteristic $p'$ not equal
to $p$, redo the same construction ane set
$\mathbb L_{p, \bar s'}=  \mathbb L_{z, p, \bar s'}$.  This finishes the proof. 

\subsection{Proof of Theorem~\ref{thm:top}}
For any $\ell \neq p$ we set $\mathbb L_\ell=(sp^{\rm top} _{\C,\bar s})^*\mathbb L_{\ell, \bar s}$ and for $\ell=p$ we set
$\mathbb L_{p}=(sp^{\rm top} _{\C,\bar s'})^*\mathbb L_{p, \bar s'}$. This finishes the proof.

\section{Density of weakly  arithmetic  local systems} \label{sc:dens}

In this section we use the notation of Section~\ref{sec:top}.  Our aim is to define a notion of weakly arithmetic local systems and to show density of them in the Betti moduli space.

\subsection{Definitions}

If $\tau : K_1 \to K_2$ is a homomorphism of fields, and $\mathbb L_1$
is a topological local system defined over $K_1$   by the homomorphism $ \rho: \pi_1(X(\C), x(\C))\to GL_r(K_1)$, we denote by
$\mathbb L_{K_1}^\tau$ the local system defined over $K_2$
obtained by post-composing  $ \rho$ by the homomorphism $ GL_r(K_1) \to GL_r(K_2)$ defined by  $\tau$. 

\medskip
Choose a finitely generated subfield $F \subset \C$ such that
$X$ and $x$ descend to $X_F$ and $x_F$ over $F$. For example $F$
could be the function field of the affine finite type scheme $S$
over which $X$ has a model.
Denote 
\ga{}{sp^{\rm top} _{\C, F} : \pi_1(X(\C), x(\C))\to \pi_1(X_F, x_F) \notag}
the comparison morphism. Recall that $M_B(X,r,\sL, T_i)$ is defined over the number ring $\sO_K \subset \C$.
  
\begin{defn}
\label{defn:arithm}
With notation as above:
\begin{itemize}
\item[1)] A point $\mathbb L_{\C} \in M_B(X, r)(\C)$ is said to be arithmetic
if there exist a finite extension $F'/F$, an \'etale $\ell$-adic local system
$\mathbb L_{\ell, F'}$ on $X_{F'}$, and a field isomorphism\footnote{It would
be equivalent to say "homomorphism of fields" here.}
$\tau : \bar \Q_\ell \to \C$ such that   $\mathbb L_\C$   and
$\big((sp^{\rm top}_{\C, F'})^{-1}(\mathbb L_{\ell, F'})\big)^\tau$ are
isomorphic.
\item[2)] A point $\mathbb L_{\C} \in M_B(X,r,\sL, T_i)(\C)$ is said to be
{\it weakly arithmetic} if there exist
\begin{itemize}
\item[i)] a prime number $\ell$ and a field isomorphism $\tau : \bar\Q_\ell \to \C$  (so  $\tau$  
defines  an $\sO_K$-algebra structure on  $\bar\Q_\ell$
via $\sO_K \subset \C$);
\item[ii)] a finite type scheme $S$ over $\Spec(\sO_K)$
such that $(X \hookrightarrow \bar X, x,  \sL, T_i)$
has a model over $S$ (see introduction);
\item[iii)] a closed point $s \in |S|$ of residual characteristic
different from $\ell$;
\item[iv)] a tame arithmetic $\ell$-adic local system
$\mathbb L_{\ell, \bar s}$ on $X_{\bar s}$ with determinant
$\sL$ and monodromy at infinity in $T_i$;
\end{itemize}
such that $\mathbb L_\C$ and
$\big((sp^{\rm top}_{\C, \bar s})^{ -1}(\mathbb L_{\ell, \bar s})\big)^\tau$
are isomorphic.
\end{itemize}
\end{defn}
 We can express the definition in an imprecise way by saying the following. 
As  $\pi_1(X(\C), x(\C))$ is finitely generated, the topological local system $\mathbb L_\C$ is defined over a ring $A$ of finite  type over $\Z$, 
so $\mathbb L_\C=\mathbb L_A\otimes_A \C$.
For almost all prime numbers $\ell$, there is a non-zero homomorphism $A\to \bar \Z_\ell$. For any such,  $ \mathbb L_A\otimes_A \bar \Z_\ell$
defines an $\ell$-adic local system. Then $\mathbb L_\C$ is weakly arithmetic if there is such a non-zero $A\to \bar \Z_\ell$ such that $ \mathbb L_A\otimes_A \bar \Z_\ell$
comes via the specialization homomorphism from a tame arithmetic $\ell$-adic local system on $X_s$ for some $s$ of large characteristic.

\begin{rmk}
The notion of an arithmetic local system does not depend on the choice of $F$. 
As we could not find a reference, we give a short argument. 
Let $F_1, F_2 \subset \C$ be two finitely generated subfields over
which $X$ and $x$ can be defined. Then the compositum is a third one.
So we many assume that $F_1 \subset F_2 \subset \C$.
We have the commutative diagram of homotopy exact sequences
$$
\xymatrix{
1 \ar[r] &
\pi_1(X_{\bar F_2}, x_{ \bar F_2}) \ar[r] \ar[d]^{\cong} &
\pi_1(X_{F_2}, x_{\bar F_2}) \ar[r] \ar[d] &
{\rm Gal}(\bar F_2/F_2) \ar[r] \ar[d] & 1 \\
1 \ar[r] &
\pi_1(X_{\bar F_1}, x_{\bar F_1}) \ar[r] &
\pi_1(X_{F_1}, x_{\bar F_1}) \ar[r] &
{\rm Gal}(\bar F_1/F_1) \ar[r] & 1
}
$$
Since $F_2/F_1$ is a finitely generated extension, the right vertical map
is open. Let  $\mathbb L$ be a geometrically irreducible arithmetic
$\ell$-adic local system over   $X_{\bar F_2}$ with underlying representation $\rho$.   Thinking of $X$ as being defined over $F_2$, 
by replacing $F_2$ by a finite extension, we may assume that   $\rho$ is defined over $F_2$. Then  $K:={\rm Ker} ({\rm Gal}(\bar F_2/F_2)\to {\rm Gal}(\bar F_1/F_1))$ lifts to $\pi_1(X_{F_2}, x_{\bar F_2}) $ and stabilizes $\mathbb L|_{X_{\bar F_2}}$, which is irreducible. Thus $\rho(K) \subset \bar \Q_\ell^\times\subset \GL_r(\bar \Q_\ell)$ and is torsion as the determinant is torsion. This defines a finite (abelian) extension of $F_2$.  Replacing $F_2$ by it, $\rho$ factors through $\pi_1(X_{F_2}, x_{\bar F_2})/K$ which is equal to $\pi_1(X_{F'_1}, x_{\bar F_1})$ where $F_1\subset F'_1$ is the finite Galois extension defined by the image of the right vertical map which is open.
\end{rmk}

\begin{rmk}
An arithmetic local system is weakly arithmetic.
Namely, suppose that $\mathbb L_\C$ is arithmetic.
Let $(S, X_S \hookrightarrow \bar X_S, x_S)$ be as in the introduction.
Denote by  $F$ the function field of $S$. Then $\mathbb L_\C$
comes from an \'etale $\ell$-adic local system
$\mathbb L_{\ell, F'}$ on $X_{F'}$ for $F'/F$ finite.
After replacing $S$ by a finite cover, we may assume $F' = F$.
A standard argument shows that after replacing $S$ by a suitable
Zariski open, we may assume that $\mathbb L_{\ell, F}$
comes from an \'etale $\ell$-adic local system
$\mathbb L_{\ell, S}$ on $X_S$
(see e.g. \cite[Proposition~6.1]{Pet20} where the argument
is performed for $S$  the spectrum of a number ring, and references therein).
Then, for a closed point $s \in S$
we see that $\mathbb L_\C$ comes from
$\mathbb L_{\ell, \bar s} := \mathbb L_{\ell, S}|_{X_{\bar s}}$
as desired. We omit the details.
\end{rmk}

\begin{nota}
1) Fixing $(X,r, \sL, T_i)$ as in Section~\ref{sec:top}, we denote by 
 \ga{}{ W(X,r,\sL, T_i) \subset M_B(X,r,\sL, T_i)(\C) \notag}
 the locus 
 of weakly arithmetic local systems.\\ 
 2)   Fixing $(X,r, \sL)$, we denote by 
 \ga{}{ W(X,r,\sL) = \cup_{\{ T_i\}} W(X,r,\sL, T_i)  \subset M_B(X,r, \sL)(\C) \notag}
  the locus of all  weakly arithmetic local systems of rank $r$ and determinant $\sL$.
\end{nota}
\subsection{Density}

\begin{thm}
\label{thm:dens} 
1) Fixing $(X,r,\sL, T_i)$,  $W(X,r,\sL, T_i)$ is dense in $M_B(X,r,\sL, T_i)(\C)$. \\
2) Fixing $(X,r, \sL)$, $W(X,r,\sL)$ is dense in $M_B(X,r,\sL)(\C) $.
\end{thm}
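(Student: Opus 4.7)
The plan is to reuse the de Jong--Gaitsgory construction of Section~\ref{pf:non-resp}, producing for each closed point $z$ of the smooth locus $M^\circ \subset M_B(X,r,\sL,T_i)_{red}$ of residue characteristic $\ell\ge 3$ a weakly arithmetic $\C$-point specializing to $z$, and then to deduce Zariski density in $M_B(X,r,\sL,T_i)\otimes\C$ from the Jacobson density of such $z$'s combined with the $\mathrm{Aut}(\C/K)$-invariance of the weakly arithmetic locus, of which complex conjugation (singled out in the introduction) is the simplest instance.

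First I would assume $M_B(X,r,\sL,T_i)(\C)\neq\emptyset$ (else the statement is vacuous) and recover the data of Section~\ref{pf:non-resp}: $\epsilon:M_B(X,r,\sL,T_i)\to\Spec(\sO_K)$ is dominant and a dense open $M^\circ\subset M_B(X,r,\sL,T_i)_{red}$ is smooth over a dense open of $\Spec(\sO_K)$. For any closed $z\in |M^\circ|$ with residue field $\F_{\ell^m}$, $\ell\ge 3$, the argument of Section~\ref{pf:non-resp} (Proposition~\ref{prop:iota}, Corollary~\ref{cor:sm}, de Jong--Gaitsgory) produces a $\Phi^n$-invariant $\bar\Z_\ell$-point of $D_{z,\bar s}(r,\sL,T_i)_{red}$, and via $\iota$ and $\sO_{M_B,z}\hookrightarrow\widehat{\sO}_{M_B,z}$ a morphism $\Spec(\bar\Z_\ell)\to M_B(X,r,\sL,T_i)$ with closed fiber $z$. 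Composing the generic fiber $\eta_z\in M_B(X,r,\sL,T_i)(\bar\Q_\ell)$ with any field isomorphism $\tau:\bar\Q_\ell\xrightarrow{\sim}\C$ compatible with the $\sO_K$-structures (cf.\ Definition~\ref{defn:arithm}) yields a weakly arithmetic $\C$-point $P_{z,\tau}:=\tau(\eta_z)\in W(X,r,\sL,T_i)$ whose image $\eta_z$ in $|M_B(X,r,\sL,T_i)|$ has $z$ in its closure.

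For density, I would argue by contradiction. Suppose $W(X,r,\sL,T_i)\subset V$ for some proper Zariski closed $V\subsetneq M_B(X,r,\sL,T_i)\otimes\C$. The locus $W(X,r,\sL,T_i)$ is $\mathrm{Aut}(\C/K)$-stable: for $\sigma\in\mathrm{Aut}(\C/K)$ and $P_{z,\tau}=\tau(\eta_z)$, we have $\sigma(P_{z,\tau})=P_{z,\sigma\circ\tau}$, again weakly arithmetic. Replacing $V$ by $\bigcap_{\sigma\in\mathrm{Aut}(\C/K)}\sigma(V)$, we may assume $V$ is itself $\mathrm{Aut}(\C/K)$-stable, and Galois descent (the fixed field of $\mathrm{Aut}(\C/K)$ on $\C$ being $K$) writes $V=V_K\otimes_K\C$ for a proper closed $V_K\subsetneq M_B(X,r,\sL,T_i)\otimes K$. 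The scheme-theoretic closure $\widetilde{V}_K\subset M_B(X,r,\sL,T_i)$ of $V_K$ over $\sO_K$ is a proper closed subscheme; by the Jacobson property, closed points of $M^\circ$ of residue characteristic $\ge 3$ are dense in $M_B(X,r,\sL,T_i)$, so we may pick such a $z$ outside $\widetilde{V}_K$. Then $\eta_z\notin V_K$ (else $z\in\widetilde{V}_K$), hence $P_{z,\tau}\notin V$, contradicting the assumption.

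Part 2 follows by stratifying $M_B(X,r,\sL)$ by the pieces $M_B(X,r,\sL,T_i)$ indexed by quasi-unipotent conjugacy classes $T_i$ at infinity and applying Part 1 to the stratum meeting a given non-empty Zariski open. The main obstacle is the Galois-descent step in Part 1: certifying that a hypothetical proper closed containing $W(X,r,\sL,T_i)$ may be taken $\mathrm{Aut}(\C/K)$-stable and hence descends to $K$, where it is avoidable by a closed point of $M^\circ$ of good residue characteristic. This substitution of Galois descent (with complex conjugation as the key concrete invariance) for the use of $\ell$-adic companions is the conceptual novelty flagged in the introduction.
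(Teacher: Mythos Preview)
Your argument for Part 1) is correct and follows essentially the same route as the paper: argue by contradiction, use $\mathrm{Aut}(\C/K)$-invariance of $W(X,r,\sL,T_i)$ to descend a hypothetical proper closed container to $K$ and then take its closure over $\sO_K$, and avoid this closure by a closed point $z\in M^\circ$ of residue characteristic $\ell\ge 3$ to which the de Jong--Gaitsgory construction of Section~\ref{pf:non-resp} applies. The paper phrases the descent step as ``$T_\C$ is invariant under $\mathrm{Aut}_{\sO_K}(\C)$, hence $T_{\sO_K}$ is the Zariski closure of $W$ in $M_B$'', which is the same mechanism.

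Your Part 2) has a genuine gap. You assert that the $M_B(X,r,\sL,T_i)$ indexed by quasi-unipotent conjugacy classes $T_i$ ``stratify'' $M_B(X,r,\sL)$, so that any non-empty Zariski open meets one of them. But these pieces do \emph{not} cover $M_B(X,r,\sL)(\C)$: a general irreducible complex local system with torsion determinant $\sL$ has no reason to have quasi-unipotent local monodromies at infinity, so the union $\bigcup_{T_i} M_B(X,r,\sL,T_i)(\C)$ is a proper countable union of locally closed subsets. What the paper invokes instead is that this union is Zariski \emph{dense} in $M_B(X,r,\sL)(\C)$; this is \cite[Theorem~1.3]{EK23} and is a nontrivial external input. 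Once you have that density, your reduction works: a non-empty open $U\subset M_B(X,r,\sL)(\C)$ meets some $M_B(X,r,\sL,T_i)(\C)$ in a non-empty relatively open set, and Part 1) then places a point of $W(X,r,\sL,T_i)\subset W(X,r,\sL)$ inside $U$. Without the citation of \cite{EK23} (or an independent proof of that density), your Part 2) is incomplete.
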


\begin{proof}
Ad 1):  Let $T_\C$ be  the Zariski closure 
  of $W(X,r,\sL, T_i)$ in $M_B(X,r,\sL, T_i)(\C)$ and $T_{\sO_K}$ be the Zariski closure of $T_\C$ in $M_B(X,r,\sL, T_i)$.  As $T_\C$ is invariant under the group ${\rm Aut}_{\sO_K}(\C)$ of field automorphisms of $\C$ over $\sO_K$,  $T_{\sO_K}$ is the Zariski closure of $W(X,r,\sL, T_i)$  in $M_B(X,r,\sL, T_i)$.

If $T_\C \neq M_B(X,r,\sL, T_i) (\C)$, 
one chooses a
closed point $z \in M_B(X, r, \mathcal{L}, T_i)$ in 
 Section~\ref{pf:non-resp} outside of $T_{\sO_K}$. Then $(sp_{\C,\bar s}^{\rm top})^{-1}( \mathbb L_{\ell, \bar s, z})$ constructed in
 Section~\ref{pf:non-resp}, with $\mathbb L_{\ell, \bar s, z}$ arithmetic on $X_{\bar s}$,  does not lie in $T_{\sO_K} (\bar \Q_\ell)$. By invariance, for  any $\tau$, 
 $((sp_{\C,\bar s}^{\rm top})^{-1}( \mathbb L_{\ell, \bar s, z}))^\tau$ does not lie on $T(\C)$, a contradiction to the definition of weak arithmeticity. 

\medskip
\noindent
Ad 2): By \cite[Theorem~1.3]{EK23}, for a torsion rank $1$ local system $\sL$ given,  
 \ga{}{ \cup_{T_i}  M_B(X, r, \sL, T_i) \notag}  is dense in 
$M_B(X,r,\sL) (\C)$. Combined with 1),  this yields 2).

\end{proof}

\begin{rmk}
In \cite[Weak~Conjecture]{EK22} it is predicted that arithmetic
$\ell$-adic local systems on a smooth quasi-projective variety
$X_\F$ defined over a finite field $\F$ of characteristic different
from $\ell$ are Zariski dense in a  Mazur, or more generally, in a Chenevier
deformation space. In \cite[Conjecture~1.1]{EK23} it is predicted that
arithmetic  local systems in $M_B(X,r, \sL)(\C)$ are Zariski dense,
where $\sL$ is a torsion rank one local system.
{\footnote{In fact in \cite{EK23} is expressed for local systems of geometric origin.}}
This is not correct, see
\cite[Theorem~8.1.2]{LL22} in which Landesman-Litt prove that
over a very general genus $\ge 3$ curve, for $r$ small, arithmetic
local systems have finite monodromy,  and \cite[Corollary~1.2.10]{LL22a}  for the consequence on non-density.
Theorem~\ref{thm:dens} yields a replacement for this. 
However, there  may be  uncountably many  weakly  arithmetic local systems due to the free choice of $\tau$.
\end{rmk}

\section{Proof of Theorem~\ref{thm:obs}} \label{sec:obst}

\subsection{Proof}[Proof of Theorem~\ref{thm:obs}.] \label{sec:obs}
 We have to prove that $\Gamma = \pi_1(X(\C), x(\C))$ has the weak 
integrality property (Definition~\ref{defn:weak})  with respect to any $(r,\chi)$ where $r\ge 1$ is a natural number and 
$\chi$ is a character of $\Gamma$. 

Let $\rho : \Gamma \to \GL_r(\C)$ be
as in Definition \ref{defn:weak}. Then
the density of weakly arithmetic local systems in
Theorem \ref{thm:dens} shows that we may assume $\rho$
corresponds to a weakly arithmetic local system $\mathbb L_\C$.
Since such a system has quasi-unipotent local monodromies
at infinity by Grothendieck's theorem ~\cite[Appendix]{ST68}, we conclude that $\mathbb L_\C$
is as in Theorem~\ref{thm:top} for some choice of quasi-unipotent
conjugacy classes $T_i$. Thus by  Theorem~\ref{thm:top}, we find a representation
$\rho_\ell : \Gamma \to \GL_r(\bar\Z_\ell)$ with determinant
$\chi$ which is irreducible over $\bar\Q_\ell$.

\medskip
Next, the discussion in the introduction shows that the group $\Gamma_0$  presented after Definition~\ref{defn:weak}
does not have the  weak integrality property for $r=2$ and $\chi$ being the trivial character. 
 Thus the weak integrality property for $\Gamma$ is a non-trivial obstruction for it to be of the shape $\pi_1(X(\C), x(\C))$. 
 The proof of
Theorem~\ref{thm:obs} is finished.

\subsection{Comments}
1) In \cite[Theorem~1.1]{Kli19}, the main integrality theorem \cite[Theorem~1.1]{EG20} for cohomologically rigid local systems is used as  {\it the} criterion   to decide that certain $p$-adic lattices are not the fundamental group of a smooth projective variety (or are not K\"ahler groups).  Recall that Theorem~\ref{thm:top} of the present article relies in part on the proof of \cite[Theorem~1.1]{EG20}, but for the other part on de Jong's conjecture. The obstruction we obtain in Theorem~\ref{thm:obs} is of a new kind, it enables one consider all quasi-projective varieties at once, with all boundary conditions and not only smooth projective varieties. \\ \ \\
2)  In \cite[Section~4]{dJEG22} examples of local systems $\mathbb L_\C$ are constructed with the following property: they lie in $M_B(X,r,\sL, T_i)(\C)$ for some $\sL$ and $T_i$, are 
in this Betti moduli  with boundary conditions  cohomologically rigid, and viewed in $M_B(X,r, \sL)(\C)$, they are still rigid but no longer cohomologically rigid. 
 So in a way this is good to keep track of the conditions at infinity.

\section{Proof of Theorem~\ref{thm:arithm}, resp.\ case} \label{sec:resp}
\label{pf:resp}  The goal of this section is to prove the resp. case of 
Theorem~\ref{thm:arithm}. That is we have to show that we can construct   infinitely many  $\ell$-adic local systems 
as  Section~\ref{sec:top}  if we start with infinitely many $\mathbb L_\C$. 
The existence of infinitely many $\mathbb L_\C$ implies
there is an irreducible component of $M_B(X, r, \sL, T_i)$
 the base change of which  to $K$ has dimension $> 0$. Thus, after replacing
$K$ by a finite extension  over which all components are defined, we may assume that there is an irreducible
component $Z \subset M_B(X, r, \sL, T_i)$ such that $Z_K$ is
geometrically irreducible and of dimension $> 0$. Then $Z$ is an
integral scheme and the morphism $Z \to \Spec(\sO_K)$ is dominant and of
relative dimension $\ge 1$. Let $Z^\circ \subset Z$ be a nonempty open
subscheme which is smooth over $\Spec(\sO_K)$ which maps into the
open $\Spec(\sO_K)^\circ$ of absolutely unramified points. We also may
and do assume that $Z^\circ$ does not meet any irreducible
component of $M_B(X, r, \sL, T_i)$ except $Z$. Finally, we may
further shrink $\Spec(\sO_K)^\circ$ such that all fibres of
$Z^\circ \to \Spec(\sO_K)^\circ$ are geometrically irreducible.

\medskip
We redo the argument of the proof~\ref{pf:non-resp}
with $Z^\circ$ replacing $M^\circ$. Namely, we pick $z \in |Z^\circ|$ closed
with residue field $\F_{\ell^m}$. This determines an $\F_{\ell^m}$
local system $\mathbb L_z$ on $X$.
Denote $t \in \Spec(\sO_K)$ be the image of $z$ and recall that $Z^\circ_t$
is smooth and geometrically irreducible of dimension $\ge 1$.
We let $S^\circ \subset S$ be the open where the residue characteristics
are prime to $|\GL_r(\F_{\ell^m})|$. Next, let $s \in |S^\circ|$ of
residue characteristic $p$. Since,  for any finite field $\F$,  $|\GL_r(\F)|$ is a polynomial
in $|\F|$,  it follows that $|\GL_r(\F_{\ell^{mp^N}})|$ is prime to $p$
for all $N \geq 0$. By the   Lang-Weil estimates
$$
\sZ = \bigcup\nolimits_{N \ge 0} Z^\circ_t(\F_{\ell^{mp^N}})
$$
is infinite. Thus we may choose an infinite sequence $z_\alpha \in Z^\circ_t$
of closed points such that $|\GL_r(\kappa(z_\alpha))|$ is prime to $p$.
Here $\kappa(z_\alpha)$ denotes the residue field.
Hence, all the local systems $\mathbb L_{z_\alpha}$
give rise to local systems $\mathbb L_{z_\alpha, \bar s}$ on $X_{\bar s}$.
Redoing the construction of  Section~\ref{pf:non-resp}
we find irreducible $\ell$-adic systems
$\mathbb L_{z_\alpha, \bar s, \ell}$ on $X_{\bar s}$,
which have arithmetic descent, with determinant $\sL$ and monodromies
$T_i$ at infinity. These systems are pairwise non-isomorphic, as their
mod $\ell$ reductions $\mathbb L_{z_\alpha, \bar s}$ are pairwise
non-isomorphic and  absolutely irreducible. For a prime $\ell' \not = p$ we fix
an isomorphism $\sigma : \bar \Q_\ell \cong \bar \Q_{\ell'}$.
Then the companions $\mathbb L_{z_\alpha, \bar s, \ell}^\sigma$ constructed
in   Section~\ref{pf:non-resp} are likewise pairwise non-isomorphic. Indeed, 
assume $\mathbb L_{z_\alpha, \bar s, \ell}^\sigma$ and $\mathbb L_{z_\beta, \bar  s, \ell}^\sigma$
are isomorphic. Let us choose a point $s'\to s$ such that the residue field extension $\kappa(s)\hookrightarrow \kappa(s') (\hookrightarrow \kappa(\bar s))$ is finite,
 so the  arithmetic descents  $\mathbb L_{z_\alpha,  s', \ell}^\sigma$ 
and  $\mathbb L_{z_\beta,  s', \ell}^\sigma$ are defined. So they differ by a character of   $\kappa(s')$.  Thus $\mathbb L_{z_\alpha, \bar s, \ell}$ and $\mathbb L_{z_\beta, \bar s, \ell}$ descend to $X_{s'}$  over which they  differ by a character of $\kappa(s')$. So they are isomorphic.  Furthermore, they have the same determinant $\sL$ and the same semi-simplification of the monodromies at infinity.

\medskip
In the same manner we may deal with the case $\ell' = p$; we omit
the details. This finishes the proof.

\section{Proof of Theorem~\ref{thm:type}   i) ii)} \label{sec:dens}
 The aim of this section is to prove  Theorem~\ref{thm:type}  i) ii).  The first part i), which comes from one part of the proof of the integrality for cohomologically rigid local systems,   is entirely contained in \cite{EG18}  and is repeated here for the reader's convenience. We also insert a comment  on Theorem~\ref{thm:type} i), see Remark~\ref{rmk:pigeon}.
 The second part ii) is new. It says that the monodromy of the 
$\ell$-adic local systems constructed in Theorem~\ref{thm:top} is large if the initial topological local system $\mathbb L_\C$ has large monodromy.

 \subsection{Part i)}
 
 If in Theorem~\ref{thm:arithm}, we assume that $\mathbb L_{\C}$
is cohomologically rigid, then  the topological local system
$(sp^{\rm top} _{\C,\bar s})^*(\mathbb L_{\ell, \bar s})$
of the statement of Theorem~\ref{thm:type} i) is constructed in this article 
exactly as in \cite{EG18}.  We give here a brief account:   $\mathbb  L_\C$ is defined over a ring of finite type $A$, 
so $\mathbb L_C= \mathbb L_A\otimes_A \C$,  and we first take a  non-zero homomorphism $A\to \bar \Z_{\ell_0}$, where $\ell_0$ is a prime number. This defines an $\ell_0$-adic local system
$\mathbb L_{\ell_0}$ which descends, as all $\ell$-adic local systems do, to $X_{\bar s}$ for $p={\rm char}(s)$ large. Call it $\mathbb L_{\ell,_0 \bar s}$. 
By the very definition, $(sp^{\rm top} _{\C,\bar s})^*(\mathbb L_{\ell_0, \bar s})$ has an $A$-model which via the embedding $A\subset \C$ gives back $\mathbb L_\C$.
Cohomological  rigidity implies that  $\mathbb L_{\ell_0, \bar s}$ descends to $\mathbb L_{\ell_0,  s}$ so we do not need de Jong's conjecture to have this arithmetic descent. 
Choose a field isomorphism  $\sigma: \bar \Q_{\ell_0} \xrightarrow{\cong }\bar \Q_\ell$ for some prime $\ell$ prime to $p$. Cohomological  rigidity  also implies that 
the companions $\mathbb L_{\ell_0, s}^\sigma= :  \mathbb L_{\ell, s}$  has the property that 
$(sp^{\rm top} _{\C,\bar s})^*(\mathbb L_{\ell, \bar s})$
is cohomologically rigid.  So  for $\sigma$ given and  each such $\mathbb L_\C$, we produce a topological local system $(sp^{\rm top} _{\C,\bar s})^*(\mathbb L_{\ell, \bar s})$
which comes from an \'etale local system, is cohomologically rigid (with all the extra conditions preserved, determinant, monodromies at infinity). This finishes the proof.

\begin{rmk} \label{rmk:pigeon}
The statement and the proof  of Theorem~\ref{thm:type} i)  do  not explain the relation between the initial $\mathbb L_\C$ and $(sp^{\rm top} _{\C,\bar s})^*(\mathbb L_{\ell, \bar s})$.
To do this,  in  \cite{EG18}  we argued that
  two non-isomorphic such 
$\mathbb L_\C$ yield two non-isomorphic $(sp^{\rm top} _{\C,\bar s})^*(\mathbb L_{\ell, \bar s})$. So by finiteness of the set of such $\mathbb L_\C$, they are integral. As  rigidity (alone) implies that $A$ could have been taken from the beginning to be the localization at finitely many places of a number ring, we conclude that $A$ can be taken to be a number ring. But we can not conclude that $\mathbb L_A\otimes_A\bar \Q_\ell$ is the system $(sp^{\rm top} _{\C,\bar s})^*(\mathbb L_{\ell, \bar s})$ constructed earlier. 
\end{rmk}

 \subsection{Part ii)} 
Consider the set
$$
Z_K \subset M_B(X, r, \sL, T_i) \times_{\Spec(\sO_K)} \Spec(K)
$$
of points parametrizing local systems such that the Zariski
closure of the monodromy does not contain $\SL_r$. This set is closed,
because the complement $W_K$ is open by \cite[Theorem 8.2]{AB94}.
Let $Z \subset M_B(X, r, \sL, T_i)$ be the Zariski closure of $Z_K$.
Then
$$
W = M_B(X, r, \sL, T_i) \setminus Z
$$
is an open subscheme with $W_K = W \times_{\Spec(\sO_K)} \Spec(K)$, i.e.,
all of the points of $W$ in characteristic zero
correspond to local systems such that the Zariski closure of
the monodromy contains $\SL_r$. So it is non-empty by assumption.
Then in the arguments of \ref{pf:non-resp} if we pick $z \in W$
we  see that our arithmetic $\ell$-adic local system
$\mathbb L_{z, \bar s, \ell}$ pulls back to a topological local
system on $X$ which corresponds to a point of $W_K$ and hence
has Zariski closure of the monodromy containing $\SL_r$.

\medskip
\noindent
Having constructed one arithmetic $\ell$-adic local system of rank $r$ on $X_{\bar s}$
with large monodromy (in the sense that the Zariski closure of the
image of monodromy contains $\SL_r$) with determinant $\sL$ and monodromies
in $T_i$ we can take the companions and show they have large monodromy too.
For this one can use known facts on compatible systems of Galois
representations, see for example \cite[Theorem~1.2.1]{D'Ad20} (where we disregard the crystalline statement)  which guarantees that
the companion of an $\ell$-adic local system with large monodromy also
has large monodromy.
\begin{rmk}

We can
argue as above and as in Section \ref{pf:resp} to prove a variant
of Theorem~\ref{thm:type} ii) for infinite collections.
We omit the detailed formulation and proof.
\end{rmk}

\section{Remarks and Questions} \label{sec:RQ}
\subsection{The crystalline version} 
It is  expected   that there are $\ell$ to $p$ companions in the sense of Deligne, see \cite[Conjecture~1.2.10]{Del80} for the original formulation and \cite[Definition~1.4 5)]{AE19} for a precise formulation. If so, Theorem~\ref{thm:arithm} implies what is as of today a conjecture{\color{red}.}
\begin{conj}
If there is one (resp.\ infinitely many pairwise non-isomorphic)  irreducible topological rank $r$ complex local system  (resp.\ systems) $\mathbb L_\C$ with  torsion determinant  $\sL$ and quasi-unipotent monodromies  in $T_i$ at infinity, 
 then 
there is a non-empty open subscheme $S^\circ\subset S$ such that for any  closed point $s \in |S|$ 
 it holds:
{\rm  there is one (resp.\ infinitely many pairwise non-isomorphic ) Frobenius invariant iscrystal $M_{s'}$ on $X_{s'}$ (resp.\ infinitely many pairwise non-isomorphic Frobenius invariant isocrystals $M_{s_\alpha}$, each defined on some $X_{s_\alpha}$), where $s'\to s$ and $s_\alpha\to s$ are closed points, with determinant $\sL$ (as an $F$-isocrystal) and residues modulo $\Z$ being along $D_{i, \bar s}$ the log of the eigenvalues of $T_i$. }

\end{conj}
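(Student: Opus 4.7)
The plan is to reduce the conjecture to Theorem~\ref{thm:arithm} by passing through the hypothesised $\ell$-to-$p$ companions. Fix the open $S^\circ \subset S$ produced by Theorem~\ref{thm:arithm} (shrinking if needed so that the good reduction and tameness conditions hold at every closed point). Given a closed point $s \in |S^\circ|$ of residual characteristic $p$, pick any prime $\ell \neq p$. Theorem~\ref{thm:arithm} 1)--3) provides an absolutely irreducible arithmetic $\ell$-adic local system $\mathbb L_{\ell, \bar s}$ on $X_{\bar s}$ of rank $r$, with determinant $\sL$ and with monodromies at infinity $T_{i,\ell}$ satisfying $T_i^{ss}=T_{i,\ell}^{ss}$. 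By definition of arithmeticity, this system descends to a closed point $s'' \to s$ with finite residue extension.

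Assuming Deligne's $\ell$-to-$p$ companions conjecture in the form of \cite[Definition~1.4 5)]{AE19}, applied to the arithmetic descent on $X_{s''}$, produces a Frobenius-invariant overconvergent isocrystal $M_{s'}$ on $X_{s'}$, for some closed point $s' \to s''$ (possibly another finite residue extension is needed to realise the Frobenius structure). The compatibility of companions with determinants — since $\sL$ is of finite order, the rank-one $F$-isocrystal companion of $\sL$ is canonically identified with $\sL$ itself — yields that $\det M_{s'} = \sL$ as $F$-isocrystals. For the local data at infinity one invokes the crystalline analogue of \cite[Th\'eor\`eme~9.8]{Del72}, which is built into the definition of $\ell$-to-$p$ companions: the eigenvalues of the tame $\ell$-adic monodromy around $D_{i,\bar s}$ (those of $T_{i,\ell}^{ss}=T_i^{ss}$) match, via $\mathrm{exp}(2\pi i \cdot )$, the residues of $M_{s'}$ along $D_{i, \bar s}$ modulo $\Z$.

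For the resp.\ case, one runs the same procedure starting from the pairwise non-isomorphic arithmetic $\ell$-adic systems $\mathbb L_{\ell, \bar s}^{(\alpha)}$ produced by Theorem~\ref{thm:arithm} in the resp.\ form (Section~\ref{pf:resp}), and takes companions $M_{s_\alpha}$ defined on some $X_{s_\alpha}$ over $s$. The non-isomorphism of the $M_{s_\alpha}$ is argued exactly as at the end of Section~\ref{pf:resp}: if $M_{s_\alpha} \cong M_{s_\beta}$ were isomorphic, then after base change to a common finite residue extension their $\ell$-adic companions would differ by a character, forcing $\mathbb L_{\ell, \bar s}^{(\alpha)} \cong \mathbb L_{\ell, \bar s}^{(\beta)}$, a contradiction.

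The main obstacle is of course the input itself: the existence of $\ell$-to-$p$ companions matching determinant and local monodromy data. Once that is granted, the only technical point is the arithmetic bookkeeping of finite residue extensions — we never control that companions are defined at the same point $s$, only above $s$ — which is precisely the reason the conjectural statement is formulated with auxiliary points $s' \to s$ and $s_\alpha \to s$ rather than descent to $s$ itself.
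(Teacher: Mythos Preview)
Your proposal is correct and matches the paper's own reasoning: the paper does not give a proof of this statement at all, because it is presented as a \emph{conjecture}, with the one-line justification that ``If [there are $\ell$-to-$p$ companions], Theorem~\ref{thm:arithm} implies what is as of today a conjecture.'' Your write-up simply unpacks this implication --- apply Theorem~\ref{thm:arithm} to produce arithmetic $\ell$-adic systems over closed points of $S^\circ$, then invoke the hypothetical $\ell$-to-$p$ companions to obtain the desired $F$-isocrystals with matching determinant and residues --- which is exactly the intended mechanism.
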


\subsection{Specific subloci}
\label{spe_subl}

Recall that Simpson's integrality conjecture~\cite[p.9]{Sim92}
is proven only for cohomologically rigid local systems \cite[Theorem~1.1]{EG18}
while there are rigid non-cohomologically rigid local systems~ \cite{dJEG22}.

\begin{question}
Given a locally closed subset $W \subset M_B(X, r, \sL, T^{ss}_i)$
does Theorem~\ref{thm:arithm} hold with the following modifications
\begin{itemize}
\item[a)] $\mathbb L_\C$ is assumed to lie in $W(\C)$;
\item[b)] the resulting $(sp^{\rm top} _{\C,\bar s})^*(\mathbb
L_{\ell, \bar s} )\otimes_{\bar \Z_\ell} \bar \Q_\ell $ should  lie in
 $W (\bar \Q_\ell)$?
\end{itemize}
\end{question}

 The answer will not be positive for all $W$; we should only consider
suitably natural loci in the moduli space. For example,
Theorem~\ref{thm:type} i) says that the answer is   ``yes"  if $W$ is
the union of the
smooth  isolated points of $M_{dR}(X,r, \sL, T^{ss} _i)(\C)$, i.e. the
underlying $\mathbb L_\C$ are cohomologically rigid.
Theorem~\ref{thm:type} ii) says the answer is    ``yes" if
we take $W$ to be the locus of points in
$M_{dR}(X, r, \sL, T^{ss}_i)(\C)$ such that the Zariski closure of
monodromy contains $\SL_r$.

\medskip
On the other hand, Simpson conjecture says the answer is   ``yes"  if
$W$ is the union of the isolated points of
$M_{dR}(X,r, \sL, T^{ss} _i)(\C)$, i.e., the
underlying $\mathbb L_\C$ are rigid. As a tentative generalization of
Simpson's conjecture, we ask if the answer is "yes" when $W$ is
the union of the  irreducible components of $M_B(X,r,\sL, T^{ss}_i)(\C)$
of a given dimension.

\subsection{Pullbacks of semisimple local systems}
Using our methods we can reprove the following theorem. Its 
  only
known proof, as far as we understand, uses the existence of a 
tame pure imaginary harmonic metric. 

\begin{thm}\cite[Theorem~25.30]{Moc07} \label{thm:Moc} \footnote{There is a typo in
{\it loc.cit.} \ where the condition of normality is dropped. The reduction
to the smooth case is explained in the proof of Theorem~\ref{thm:Moc} .}
Let $f : Y \to X$ be a morphism of normal quasi-projective complex varieties.
The pullback by $f$ of a semi-simple complex local system $\mathbb L_\C$ is
semi-simple.
\end{thm}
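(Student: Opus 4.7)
I propose to adapt the arithmetic lifting strategy of Section~\ref{sec:top} by combining de Jong's conjecture with Deligne's geometric semisimplicity theorem from Weil~II applied to the pullback, and then to propagate the conclusion from the dense set of weakly arithmetic local systems (Theorem~\ref{thm:dens}) to an arbitrary $\mathbb{L}_\C$.

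First I would carry out two reductions. Since for a normal complex variety the fundamental group agrees with that of its smooth locus, we can restrict to smooth Zariski open subvarieties of $X$ and $Y$ on which $f$ restricts, reducing to the case that $X, Y$ are smooth quasi-projective; semi-simplicity of the underlying representation of $\pi_1$ is unaffected. Since pullback commutes with direct sums and a semi-simple local system is a direct sum of irreducibles, I may further assume $\mathbb{L}_\C$ is irreducible.

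Next I would run the arithmetic lifting of Section~\ref{pf:non-resp} on $X$. Spreading $X, Y, f$ over a finite-type affine $\Z$-scheme $S$, picking a closed point $z$ in the smooth locus $M^\circ \subset M_B(X, r, \sL, T_i)_{red}$ with residue field $\F_{\ell^m}$ for some $\ell \ge 3$, and for $s \in |S^\circ|$ of residue characteristic $p \ne \ell$ invoking Corollary~\ref{cor:sm} and de Jong's conjecture, I produce an arithmetic $\ell$-adic local system $\mathbb{L}_{\ell, \bar s}$ on $X_{\bar s}$. Pulling back along $f_{\bar s} : Y_{\bar s} \to X_{\bar s}$ yields an arithmetic $\ell$-adic local system on $Y_{\bar s}$. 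Since $\mathbb{L}_{\ell, \bar s}$ is irreducible with torsion determinant, its arithmetic descent is pure after a character twist (by Lafforgue's theorem on curves together with the reduction from higher dimension via restriction to curves in Weil~II), and purity is preserved by pullback. Deligne's theorem in Weil~II then implies $f_{\bar s}^* \mathbb{L}_{\ell, \bar s}$ is semi-simple as a $\pi_1^t(Y_{\bar s})$-representation. Pulling back along $sp^{\mathrm{top}}_{\C, \bar s} : \pi_1(Y(\C)) \to \pi_1^t(Y_{\bar s})$, whose image is dense, preserves semi-simplicity (any continuous closed invariant subspace for the dense image is invariant for the whole group), producing a semi-simple complex local system $f^* \mathbb{L}_\C^z$ on $Y$, where $\mathbb{L}_\C^z \in M_B(X, r, \sL)(\C)$ is the weakly arithmetic local system corresponding to $\mathbb{L}_{\ell, \bar s}$ via a choice of isomorphism $\tau : \bar\Q_\ell \cong \C$.

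Finally I would propagate the statement from this dense set to an arbitrary $\mathbb{L}_\C$. By Theorem~\ref{thm:dens} the weakly arithmetic locus is Zariski dense in $M_B(X, r, \sL)$, and we have established semi-simplicity of the pullback on this dense subset. I expect the main obstacle to be extending the conclusion to all of $M_B(X, r, \sL)(\C)$, because semi-simplicity of a representation is in general only a constructible condition in families and not closed. A natural way to resolve this is to study the universal pullback representation of $\pi_1(Y(\C))$ over $M_B(X, r, \sL)$ and exhibit its semi-simple locus as a closed subset by comparing the dimension of $\mathrm{End}(f^* \mathbb{L}_\C)$ (upper semi-continuous in $\mathbb{L}_\C$) with the dimension of the endomorphism algebra of its semisimplification, which is determined algebraically by the character of $f^* \mathbb{L}_\C$ on finitely many generators of $\pi_1(Y(\C))$; Zariski density of the weakly arithmetic locus should then force semi-simplicity on all of $M_B(X, r, \sL)$.
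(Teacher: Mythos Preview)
Your reduction to the smooth case and your argument that the pullback of a weakly arithmetic local system is semi-simple (via Lafforgue, purity, and \cite[Lemme~3.4.3]{Del80}) are both correct; they are in fact the arithmetic core of the paper's own proof. The genuine gap is the final propagation step. You correctly flag that semi-simplicity is only constructible, but your proposed repair does not close it: both $\dim \mathrm{End}(f^*\mathbb L)$ and $\dim \mathrm{End}((f^*\mathbb L)^{ss})$ are upper semi-continuous in $\mathbb L$, and the locus where they coincide need not be closed. Already the one-parameter family of $\Z$-representations $t \mapsto \left(\begin{smallmatrix}1 & 1 \\ 0 & 1+t\end{smallmatrix}\right)$ is semi-simple precisely for $t \ne 0$; here $\dim\mathrm{End}=2$ for all $t$ while $\dim\mathrm{End}$ of the semisimplification jumps from $2$ to $4$ at $t=0$, so the semi-simple locus is open and not closed. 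Nothing prevents the family $\mathbb L \mapsto f^*\mathbb L$ over $M_B(X, r, \sL)$ from exhibiting the same behaviour, and Zariski density of the weakly arithmetic locus then tells you nothing at such a special point.

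The paper circumvents this by changing the moduli problem. Rather than working over $M_B(X, r, \sL)$ and trying to control a constructible locus from a dense set, it introduces a moduli space $M_B(f, r', r)$ of pairs $(\mathbb L, \mathbb L' \subset f^*\mathbb L)$ with $\mathbb L$ irreducible on $X$ and $\mathbb L'$ a rank~$r'$ subsystem on $Y$, together with the sublocus where $\mathbb L'$ admits a splitting. Assuming for contradiction that some $f^*\mathbb L_\C$ has a non-split subsystem, one chooses an irreducible locally closed piece $W'$ of the non-split locus, a closed point $z \in W'$ smooth over $\Z$, and runs the de Jong--Drinfeld fixed-point argument for the $\Phi^n$-action \emph{inside} $(W')^\wedge_z$; this is legitimate because Frobenius pullback preserves the property ``$\mathbb L'$ is not a direct summand''. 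The resulting $\Phi^n$-fixed $\bar\Z_\ell$-point is an arithmetic pair that is non-split by construction yet split by purity and Weil~II, a contradiction. The decisive move is to build the offending subsystem into the moduli space, so that the arithmetic lift is produced \emph{at} a putative bad point rather than merely near it.
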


We sketch a proof of the theorem using our techniques.
The subtle part is that ``being semi-simple'' is neither a closed nor an open
condition in moduli of representations, but it is constructible
and this is enough for our arguments. We briefly indicate the proof strategy.

\begin{proof}
Let us reduce to the case where $X$ and $Y$ are smooth. Let $g : X' \to X$
be a resolution of singularities. Note that $g^*\mathbb L_\C$ is semi-simple
as $\pi_1(X'(\C)) \to \pi_1(X(\C))$ is surjective due to the fact that $X$
is normal. Since $X'\times_X Y \xrightarrow{{\rm projection}} Y$ is surjective,
we can find a generically finite morphism of varieties $h : Y' \to Y$
with $Y'$ smooth and
a morphism $f' : Y' \to X'$ such that $g \circ f' = f \circ h$.
 Here is the corresponding picture
$$
\xymatrix{
Y' \ar[r]_{f'} \ar[d]_h &
X' \ar[d]^g \\
Y \ar[r]^f & X
}
$$
After replacing
$Y'$ by a generically finite cover, we may asume the function field extension
$\C(Y')/\C(Y)$ is Galois. Then there exists an open $Y^\circ \subset Y$
which is smooth and such that
$(Y')^\circ = h^{-1}(Y^\circ) \to Y^\circ$ is finite \'etale
and Galois. Since $Y$ is normal the map
$\pi_1(Y^\circ(\C)) \to \pi_1(Y(\C))$ is surjective
and hence it suffices to show that $f^*\mathbb L_\C|_{Y^\circ}$
is semi-simple. Since $(Y')^\circ \to Y^\circ$ is Galois it
is enough to show that $h^*f^*\mathbb L_\C|_{(Y')^\circ}$ is
semi-simple. Since $h^*f^*\mathbb L_\C = (f')^*g^*\mathbb L_\C$
we see that it suffices to prove the result for $f'|_{(Y')^\circ}$.

\medskip
From now on we assume $X$ and $Y$ smooth.
To obtain a contradiction, we assume that $\mathbb L_\C$ is irreducible
and that we are given a subsystem $\mathbb L' \subset f^*\mathbb L_\C$ of
rank $0 < r' < r$ which does not split off.
Consider the morphisms of Betti moduli spaces
\ga{}{ (\star) \ \ 
M_B^{split}(f , r', r)
\longrightarrow
M_B(f , r', r)
\notag
}
Here the moduli space $M_B(f, r', r)$ parametrizes pairs
$(\mathbb L, \mathbb L' \subset f^*\mathbb L)$ consisting of
an irreducible local system $\mathbb L$ over $X$  together
with a rank $r'$ subsystem $\mathbb L'$ over $Y$.
And the moduli space $M_B^{split}(f , r', r)$
parametrizes triples $(\mathbb L, \mathbb L' \subset f^*\mathbb L, \tau)$
where $\tau : f^*\mathbb L \to \mathbb L'$ is a splitting of the
inclusion map.

\medskip
The moduli schemes in $(\star)$ are finite type schemes over $\Z$; this
follows from arguments similar to those in Subsection \ref{sec:Betti}.
Hence, by Chevalley's theorem the image of $(\star)$
is a constructible set. Let $W \subset M_B(f , r', r)$
be the complement of the image; this is also a constructible subset.
The assumption that we have $\mathbb{L}_\C$
and $\mathbb{L}' \subset f^*\mathbb{L}_\C$ tells us that $W$
has a characteristic zero point. Let $W' \subset W$ be a subset
which is an irreducible, locally closed subset of $M_B(f, r', r)$
containing a generic point of $W$ of characteristic zero.
After replacing $W'$ by an open subset,
we may and do assume that $W'$ does not meet the closure of $W \setminus W'$.
We view $W'$ as a reduced, irreducible, locally closed subscheme
of $M_B(f, r', r)$.   As in Subsection \ref{pf:non-resp} we choose a
closed point $z \in W'$ in the smooth locus of $W' \to \Spec(\mathbb Z)$.
Say $\kappa(z) = \F_{\ell^m}$ for a prime number $\ell \ge 3$
and some $m \in \N_{>0}$.

\medskip
Next, as in the introduction, we choose a model for $f$, i.e., we choose
an integral affine scheme $S$
of finite type over $S$, a morphism $Y_S \to X_S$ of smooth schemes over $S$
such that $Y_S$ and $X_S$ have a good compactifications over $S$, and
such that there is a dominant morphism $\Spec(\C) \to S$ such that
the base change of $Y_S \to X_S$ by this morphism is isomorphic to $f$.
Consider an open $S^\circ$ and a closed point $s \in S^\circ$ as in
Subsection \ref{pf:non-resp}. We obtain an
absolutely irreducible local $\F_{\ell^m}$-system $\mathbb L_{z, \bar s}$
over $X_{\bar s}$ which now comes endowed with a subsystem
$\mathbb L'_{z, \bar s} \subset f_{\bar s}^*\mathbb L_{z, \bar s}$
of rank $r'$ over $Y_{\bar s}$. Next, we consider the deformation space
$$
D_{z, \bar s}(f, r', r)
$$
classifying deformations of $\mathbb L_{z, \bar s}$ endowed with a rank $r'$
subsystem of the pullback to $Y_{\bar s}$ deforming $\mathbb L'_{z, \bar s}$.
The analogue of Proposition \ref{prop:iota} holds in this situation:
the morphism
$$
D_{z, \bar s}(f, r', r) \xrightarrow{\iota} M_B(f, r', r)^\wedge_z
$$
to the formal completion of the moduli space is an isomorphism.
(But this time we do not know or claim
formal smoothness for this deformation space or its reduced structure.)
Since the local system $\mathbb L_{z, \bar s}$ and the subsystem
$\mathbb L'_{z, \bar s} \subset f_{\bar s}^*\mathbb L_{z, \bar s}$
can be defined over a finite extension of $\kappa(s)$, we
obtain an action of $\Phi^n$ on $D_{z, \bar s}(f, r', r)$;
please compare with the discussion following Corollary \ref{cor:sm}.

\medskip
To get a fixed point for the action of $\Phi^n$, we restrict
to deformations lying in $W'$ as follows. Denote
$$
W'_{z, \bar s} \subset D_{z, \bar s}(f, r', r)
$$
the inverse image of the closed formal subscheme
$(W')^\wedge_z \subset M_B(f, r', r)_z^\wedge$ by the isomorphism $\iota$.
Now we claim that $W'_{z, \bar s}$ is invariant under the action
of $\Phi^n$. Namely, $W'_{z, \bar s} \cong (W')^\wedge_z$
is formally smooth over $W(\F_{\ell^m})$ by our choice of $z$.
Hence in order to see that it is stabilized by $\Phi^n$
it suffices to see that its set of $\bar \Z_\ell$-points is stabilized.
However, $\bar \Z_\ell$-points of $(W')^\wedge_z$
are those $\bar \Z_\ell$-points of $D_{z, \bar s}(f, r', r)$
such that the corresponding $\bar \Q_\ell$ pair
$(\mathbb L, \mathbb L' \subset f^*\mathbb L)$
does not split\footnote{Here we need to use the careful
choice of the component $W'$ and the fact that no other
component of the closure of $W$ passes through $z$.}.
Pulling back by the Frobenius automorphism does not
change this property. So $W'_{z, \bar s}$ is invariant under the action
of $\Phi^n$.

\medskip
We conclude as before that we obtain a $\bar \Z_\ell$-point of
$W'_{z, \bar s}$ fixed by $\Phi^n$. This point corresponds to a pair
$(\mathbb M, \mathbb M' \subset f_{\bar s}^*\mathbb M)$ consisting of a
$\bar\Z_\ell$-local system on $X_{\bar s}$ and a subsystem of rank
$r'$ over $Y_{\bar s}$. Since our fixed point lies in $W'$, by definition of
$W'$, the local system $\mathbb{M}$ is irreducible and the inclusion
$\mathbb M' \subset f_{\bar s}^*\mathbb M$ does not split!
Being fixed by $\Phi^n$ exactly signifies that
$\mathbb M$ descends to a Weil sheaf (\cite[Definition 1.1.10]{Del80})
$\mathbb{M}^\circ$ on $X_\kappa$ for a finite extension
$\kappa / \kappa(s)$ and that the inclusion
$\mathbb M' \subset f_{\bar s}^*\mathbb M$ comes from an inclusion
$(\mathbb M')^\circ \subset f_\kappa^*\mathbb M^\circ)$ of Weil sheaves.
We will see that this leads to a contradiction.

\medskip
Namely, by \cite[Proposition~1.3.14]{Del80}, we may assume that
$\mathbb M^\circ$ is arithmetic (``par torsion'').
While there is no condition on the determinant and the monodromies
at infinity in the definition of moduli in $(\star)$, we now conclude
the determinant of $\mathbb M$ is torsion (and the monodromies at infinity
are quasi-unipotent). This means we can further assume $\mathbb M^\circ$
has finite order determinant (again ``par torsion'').
By \cite[Th\'eor\`eme~VII.6]{Laf02} in dimension $1$, and
\cite[Theorem4.4]{EK12}, or \cite[Section 0.7]{Del12}
in higher dimension, $\mathbb M$ is pure of weight $0$, and so is
its pullback to $Y_{\bar s}$. Thus \cite[Lemme~3.4.3]{Del80} implies 
that $f^*\mathbb M|_{Y_{\bar \kappa}}$ is semi-simple.
This is the desired contradiction.
\end{proof}

\subsection{The example of Becker-Breuillard-Varj\'u} \label{BBV} 
In \cite{BBV22}, the authors study the  $\SL(2, \C)$-character variety ${\rm Ch}(\Gamma_0, 2, \mathbb I)$  of irreducible representations of the residually finite  group $\Gamma_0$ defined in \cite[Theorem~4]{DS05}, with two generators  $\{a,b\}$ and one relation $b^2=a^2ba^{-2}.$  They prove that ${\rm Ch}(\Gamma_0, 2, \mathbb I)(\C)$ consists of two points. 
The first one $\mathbb L_1$ has representation
\ga{}{\rho_1(a)=\frac{1}{\sqrt{2}} \begin{pmatrix}
1 &  1 \\
-1 & 1 \notag
\end{pmatrix},
\ \ \rho_1(b)= \begin{pmatrix}
j &  0 \\
0 & j^2 
\end{pmatrix},
\notag} 
where $j$ is a primitive $3$-rd root of unity.  It is defined over $\Q(j)$. $\mathbb L_2$ is Gaois conjugate to $\mathbb L_1$. The authors compute
\ga{}{ \rho(ab) =\frac{j}{\sqrt{2}} \begin{pmatrix}
1 &  j \\
-1 & j  \notag
\end{pmatrix}. }
As ${\rm Trace}(\rho(ab))=-\frac{1}{\sqrt{2}}$, $\mathbb L_1$ is not integral  at  $\ell=2$, so $\mathbb L_2$ is not integral at  $\ell=2$ either. Furthermore, 
$\rho_1(a)$ does not preserve the eigenvalues of $\rho_1(b)$, so $\mathbb L_1$ and thus $\mathbb L_2$ are irreducible with dense monodromy in $\SL_2(\C)$.
They also compute that $H^1(\Gamma_0, \sE nd^0(\mathbb L_i))=0$.  That is,  in ${\rm Ch}(\Gamma_0,2,\mathbb I)$,  those two points are cohomologically rigid.

\end{document}